\newtheorem{thm}{Theorem}[section]
\newtheorem{prop}[thm]{Proposition}
\newtheorem{cor}[thm]{Corollary}
\newtheorem{conj}[thm]{Conjecture}
\numberwithin{equation}{section}
\theoremstyle{definition}
\newtheorem{remark}[thm]{Remark}
\newcommand{\qqed}{\hspace*{\fill}$\Box$}
\newcommand{\Db}{{\rm D}^{\rm b}}
\newcommand{\Mot}{{\rm Mot}}
\newcommand{\Br}{{\rm Br}}
\newcommand{\CH}{{\rm CH}}
\newcommand{\NS}{{\rm NS}}
\newcommand{\rk}{{\rm rk}}
\newcommand{\Hom}{{\rm Hom}}
\newcommand{\ch}{{\rm ch}}
\newcommand{\td}{{\rm td}}
\newcommand{\cal}{\mathcal}
\newcommand{\ke}{{\cal E}}
\newcommand{\ko}{{\cal O}}
\newcommand{\LL}{\mathbb{L}}
\newcommand{\ZZ}{\mathbb{Z}}
\newcommand{\QQ}{\mathbb{Q}}
\newcommand{\CC}{\mathbb{C}}
\newcommand{\hh}{\mathfrak{h}}
\renewcommand{\to}{\xymatrix@1@=15pt{\ar[r]&}}
\renewcommand{\rightarrow}{\xymatrix@1@=15pt{\ar[r]&}}
\renewcommand{\leftarrow}{\xymatrix@1@=15pt{&\ar[l]}}
\renewcommand{\mapsto}{\xymatrix@1@=15pt{\ar@{|->}[r]&}}
\renewcommand{\twoheadrightarrow}{\xymatrix@1@=18pt{\ar@{->>}[r]&}}
\renewcommand{\hookrightarrow}{\xymatrix@1@=15pt{\ar@{^(->}[r]&}}
\newcommand{\hook}{\xymatrix@1@=15pt{\ar@{^(->}[r]&}}
\newcommand{\congpf}{\xymatrix@1@=15pt{\ar[r]^-\sim&}}
\renewcommand{\cong}{\simeq}
\begin{document}

\title[]{Motives of isogenous K3 surfaces}

\author[D.\ Huybrechts]{D.\ Huybrechts}

\address{Mathematisches Institut,
Universit{\"a}t Bonn, Endenicher Allee 60, 53115 Bonn, Germany}
\email{huybrech@math.uni-bonn.de}

\begin{abstract} \noindent
We  prove that isogenous K3 surfaces have isomorphic Chow motives.
This provides a motivic interpretation of a long standing conjecture of {\v{S}}afarevi{\v{c}} which has been settled
only recently by Buskin. The main step consists of a new proof of {\v{S}}afarevi{\v{c}}'s conjecture
that circumvents the analytic parts in \cite{Buskin}, avoiding twistor spaces and non-algebraic K3 surfaces.
 \vspace{-2mm}
\end{abstract}

\maketitle

{\let\thefootnote\relax\footnotetext{The author is supported by the SFB/TR 45 `Periods,
Moduli Spaces and Arithmetic of Algebraic Varieties' of the DFG
(German Research Foundation).}
\marginpar{}
}

Two complex projective K3 surface $S$ and $S'$ are called \emph{isogenous} if there exists a Hodge isometry
$\varphi\colon H^2(S,\QQ)\congpf H^2(S',\QQ)$, i.e.\ an isomorphism of $\QQ$-vector spaces compatible with the
intersection pairing as well as the Hodge structure on both sides. Via Poincar\'e duality and K\"unneth formula,
$\varphi$ corresponds to a Hodge class $[\varphi]\in H^{2,2}(S\times S',\QQ)$ on the product $S\times S'$ of the two surfaces.

In \cite{ShafICM} {\v{S}}afarevi{\v{c}} asked whether any such $[\varphi]$ is 
algebraic, i.e.\  of the form  $[\varphi]=\sum n_i[Z_i]$ for certain surfaces $Z_i\subset S\times S'$ and rational numbers $n_i$.
Forty years later this  was answered affirmatively by Buskin \cite{Buskin}. The result  confirms the Hodge conjecture
in a geometrically interesting situation and can be viewed as a generalization of the global Torelli theorem for K3 surfaces.

Indeed, the global Torelli theorem for K3 surfaces asserts that
any effective integral Hodge isometry $\varphi\colon H^2(S,\ZZ)\congpf H^2(S',\ZZ)$
can be lifted to an isomorphism $f\colon S\congpf S'$ and so $[\varphi]=[\Gamma_f]$, which is algebraic.
Note that 
the global Torelli theorem not only answers {\v{S}}afarevi{\v{c}}'s question for (effective) integral
Hodge isometries,  it also provides a motivic reason for the class $[\varphi]$ being algebraic,  namely that it is
induced by an isomorphism between $S$ and $S'$.

\smallskip

Examples of rational Hodge isometries can be produced by means of moduli spaces of sheaves,
often leading to non-isomorphic but isogenous K3 surfaces. Assume
 $S'=M(v)$ is a fine moduli space of stable sheaves  on $S$.
 Then the universal family $\ke$ on $S\times S'$, an analogue of the Poincar\'e bundle for
abelian varieties, provides a class $\ch_2(\ke)\in H^{2,2}(S\times S',\QQ)$.
As shown by Mukai  \cite{MukaiVB}, a minor modification of this class yields indeed
a Hodge isometry $H^2(S,\QQ)\cong H^2(S',\QQ)$. In fact, it defines an integral Hodge isometry
$T(S)\cong T(S')$ between the transcendental lattices of the two surfaces and  a rational isometry $\NS(S)\otimes\QQ\cong \NS(S')\otimes \QQ$ between
their algebraic parts. The motivic nature of the rational Hodge isometry,
beyond being induced by a universal sheaf, has been explained in \cite{HuyM}:
For any fine moduli space $S'=M(v)$, the induced Hodge isometry $H^2(S,\QQ)\cong H^2(S',\QQ)$
can be lifted to an isomorphism  $\hh(S)\cong\hh(S')$ between the Chow motives of $S$ and $S'$. 

 Mukai also constructs in \cite{MukaiVB} further classes that yield  non-integral Hodge isometries between 
the transcendental parts  $T(S)\otimes\QQ\cong T(S')\otimes\QQ$ by allowing coarse moduli spaces, i.e.\ moduli spaces for 
which only a quasi-universal or a twisted universal family $\ke$ exists.  This approach  has led to the  verification of {\v{S}}afarevi{\v{c}}'s
conjecture for Picard rank $\rho(S)\geq 5$,
see \cite{MukaiVB,Nikulin} and Remark \ref{rem:History}.

\medskip

Our first main result provides a moduli  interpretation of isogenies between K3 surfaces:

\begin{thm}\label{thm:main1} Any Hodge isometry $H^2(S,\QQ)\cong H^2(S',\QQ)$ between two
complex projective K3 surfaces can be written as a composition of Hodge isometries between projective K3 surfaces
$$H^2(S=S_0,\QQ)\cong H^2(S_1,\QQ)\cong\cdots\cong H^2(S_{n-1},\QQ)\cong H^2(S_n=S',\QQ),$$
with $S_i$ isomorphic to a coarse moduli space of complexes of twisted coherent sheaves on $S_{i-1}$ and
the Hodge isometry $H^2(S_{i-1},\QQ)\cong H^2(S_i,\QQ)$ induced (up to sign) by a twisted universal
family $\ke$ of complexes of twisted sheaves on $S_{i-1}$.
\end{thm}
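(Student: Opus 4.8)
First, I would prove the statement at the level of the rational Mukai lattice and then restrict to $H^2$. Extending $\varphi$ by the identity on $H^0\oplus H^4$ gives a Hodge isometry $\psi\colon\widetilde H(S,\QQ)\cong\widetilde H(S',\QQ)$ preserving the weight decompositions, and any factorization of $\psi$ into Hodge isometries induced by twisted universal families on products $S_{i-1}\times S_i$ — with $S_i$ a moduli space of complexes of $\alpha_{i-1}$-twisted sheaves on $S_{i-1}$ for some $\alpha_{i-1}\in\Br(S_{i-1})$ — restricts to the desired factorization of $\varphi$ (up to the sign in Mukai's modification). The known \emph{integral} case is decisive here: combining Orlov's classification of Fourier--Mukai equivalences between derived categories of K3 surfaces, its twisted refinement due to Huybrechts and Stellari, and the fact that a Fourier--Mukai kernel between K3 categories realizes one side as a moduli space of Bridgeland-stable complexes on the other with the kernel as universal family, one obtains: every \emph{integral} Hodge isometry $\widetilde H(S,\alpha,\ZZ)\cong\widetilde H(S',\alpha',\ZZ)$ of twisted Mukai lattices is induced by an elementary move of the form allowed in the theorem. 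It is precisely at this point that complexes (rather than only sheaves) and twisted sheaves (rather than untwisted ones) are needed.

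The main work is to reduce a general rational $\psi$ to an integral one by finitely many elementary moves. Let $N(\psi)$ be the order of the finite group $(\psi(\widetilde H(S,\ZZ))+\widetilde H(S',\ZZ))/\widetilde H(S',\ZZ)$; it equals $1$ exactly when $\psi$ is an isomorphism of lattices, in which case the integral case concludes the proof. I would induct on $N(\psi)$, clearing the $p$-part for a single prime $p\mid N(\psi)$ at each step. The tool is Mukai's construction of moduli spaces of twisted sheaves: for a $B$-field $B\in H^2(S,\QQ)$ with $pB$ integral one obtains a Brauer class $\alpha\in\Br(S)$ of $p$-power order whose twisted transcendental lattice $T(S,\alpha)$ differs from $T(S)$ only $p$-adically, and for a suitable primitive isotropic algebraic Mukai vector $v$ of controlled rank the moduli space $S_1=M_{(S,\alpha)}(v)$ of $\alpha$-twisted complexes is a K3 surface whose twisted universal family induces a Hodge isometry $\mu\colon\widetilde H(S,\QQ)\cong\widetilde H(S_1,\QQ)$ (restricting, up to sign, to an $H^2$-isometry between the untwisted surfaces) all of whose denominators are powers of $p$. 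Choosing $B$, $\alpha$ and $v$ appropriately forces $N(\psi\circ\mu^{-1})<N(\psi)$ with the parts at primes $\neq p$ unchanged (because $\mu$ is an isomorphism of $\ZZ_q$-lattices for $q\neq p$), so the induction runs. After finitely many moves $\psi$ is integral; one further move from the integral case, corrections by reflections in $(-2)$-classes and by $\pm\id$ (again elementary moves), and the global Torelli theorem then identify the last surface of the chain with $S'$, and the composition of the induced $H^2$-isometries recovers $\varphi$.

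The hard part will be the existence claim hidden in the inductive step: that $B$ (equivalently $\alpha$) and $v$ can always be chosen so that the elementary move $\mu$ effects the prescribed $p$-adic modification of $\psi$. This is a compatibility between the arithmetic of the $p$-parts of the discriminant forms of the transcendental and algebraic lattices and the geometry of moduli of twisted sheaves, and establishing it requires the surjectivity of the period map for twisted K3 surfaces together with Yoshioka's non-emptiness, smoothness and irreducibility results for such moduli spaces — with the Picard rank $\geq5$ analysis of Mukai and Nikulin \cite{MukaiVB,Nikulin} serving as the prototype. That such single-prime moves suffice to reach every rational Hodge isometry is itself an arithmetic input: by Zarhin the rational Hodge isometry group of the transcendental lattice is an orthogonal or unitary group over its endomorphism field, non-compact at the archimedean place carrying the period, so Eichler's theorem and strong approximation for the associated spin group reduce generation to the local, single-prime problems solved in the inductive step. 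Carrying all of this out while keeping track of the $B$-field twists so that every intermediate Hodge structure stays that of a genuine projective K3 surface is the crux of the argument, and it is this bookkeeping that replaces the analytic input — twistor spaces and non-algebraic K3 surfaces — of \cite{Buskin}.
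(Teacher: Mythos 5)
Your overall strategy---reduce a rational Hodge isometry to integral ones by a chain of moves realized by moduli spaces of twisted complexes---is in the right spirit, but the argument has a genuine gap at exactly the point you yourself flag as the crux: the inductive step. You assert that for every prime $p\mid N(\psi)$ one can choose a $B$-field with $pB$ integral, a Brauer class $\alpha$ of $p$-power order, and a primitive isotropic Mukai vector $v$ so that the moduli space of $\alpha$-twisted complexes is a projective K3 surface whose twisted universal family induces a Hodge isometry with denominators only at $p$, restricting (up to sign) to an isometry on $H^2$, and effecting precisely the prescribed $p$-adic modification so that $N$ strictly drops. No construction is given; it is deferred to twisted period-map surjectivity, Yoshioka, Zarhin, Eichler and strong approximation. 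But this existence statement is essentially the theorem itself in the special case of prime-power cyclic isogenies; it does not follow routinely from the cited results. In particular, an equivalence $\Db(S,\alpha)\cong\Db(S',\alpha')$ induces an isometry of ungraded twisted Mukai lattices, and it only yields an isometry on $H^2$ when it commutes with the embeddings $\exp(B)$ and $\exp(B')$ of the finite-index sublattices $H^2(S,\ZZ)_B$ and $H^2(S',\ZZ)_{B'}$; arranging this, together with the projectivity of every intermediate surface, is the bookkeeping you acknowledge but do not carry out. (Your opening reduction also factors the graded extension $\psi$ of $\varphi$ by the identity on $H^0\oplus H^4$, whereas the moves you allow are not graded, so the compatibility has to be formulated through the $\exp(B)$-embeddings rather than by taking degree-two parts.) As written this is a program, not a proof.

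For comparison, the paper's route avoids both the prime-by-prime induction and any approximation argument. By Cartan--Dieudonn\'e, a rational isometry of $\Lambda_\QQ$ is a product of reflections $s_{b_i}$ with $b_i\in\Lambda$ primitive, and surjectivity of the (untwisted) period map realizes the intermediate Hodge structures by projective K3 surfaces, reducing everything to a single reflective isometry $\varphi=s_b$. Setting $n=(b)^2/2$ and $B=b/n$, an explicit lattice computation extends $\varphi$, via $\exp(B)$ and $\exp(B')$ with $B'=-\varphi(B)$, to an integral Hodge isometry $\widetilde H(S,B,\ZZ)\cong\widetilde H(S',B',\ZZ)$, which after an orientation correction lifts by the main theorem of \cite{HuySt} to an equivalence $\Db(S,\alpha)\cong\Db(S',\alpha')$; its kernel is the required twisted universal family and the twisted Chern character recovers $\varphi$ up to sign. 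If you want to salvage your route, the honest way to fill your inductive step is precisely this reflection-plus-$\exp(B)$ construction, at which point the appeals to strong approximation, Bridgeland stability and Yoshioka's results become unnecessary.
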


In the language of derived categories, the result says that there exist Brauer classes
$\alpha\in\Br(S)$, $\alpha_1,\beta_1\in\Br(S),\ldots,\alpha_{n-1},\beta_{n-1}\in\Br(S_{n-1})$, and $\alpha'\in\Br(S')$
and exact linear  equivalences between bounded derived categories of twisted coherent sheaves
\begin{equation}\label{eqn:der}
\begin{array}{rlll}
\Db(S,\alpha)\cong&\!\!\!\Db(S_1,\alpha_1),&&\\
                              &\!\!\!\Db(S_1,\beta_1)\cong&\!\!\!\Db(S_2,\alpha_2) ,&\\
                              &&\,\,\,\vdots&\\
                              &&\!\!\!\Db(S_{n-2},\beta_{n-2})\cong&\!\!\!\Db(S_{n-1},\alpha_{n-1}),\\
                              &&&\!\!\!\Db(S_{n-1},\beta_{n-1})\cong\Db(S',\alpha').
\end{array}
\end{equation}
This usually does not mean that $\Db(S,\alpha)$ and $\Db(S',\alpha')$ are equivalent for appropriated choices
of $\alpha\in\Br(S)$ and $\alpha'\in\Br(S')$, see Remark \ref{rem:twisted}. It should not be too difficult to improve Theorem \ref{thm:main1} such that
the $S_i$ are moduli spaces of twisted sheaves (and not complexes of those).

\smallskip

Combining Theorem \ref{thm:main1} with the arguments in  \cite{HuyM} generalized to the twisted case, one deduces a
motivic interpretation of the notion of isogeneous K3 surfaces:

\begin{thm}[Motivic {\v{S}}afarevi{\v{c}} conjecture]\label{thm:main2} 
Any Hodge isometry $H^2(S,\QQ)\cong H^2(S',\QQ)$ between two complex projective K3 surfaces
can be lifted to an isomorphism of Chow motives $\hh(S)\cong\hh(S')$. In particular, two
isogenous K3 surfaces have isomorphic Chow motives:
$$H^2(S,\QQ)\cong H^2(S',\QQ)\text{\rm ~(Hodge isometry)} \Rightarrow \hh(S)\cong\hh(S').$$
\end{thm}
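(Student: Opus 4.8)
The strategy is to reduce the general statement to the two inputs already available: Theorem \ref{thm:main1}, which factors an arbitrary rational Hodge isometry through a chain of derived equivalences of twisted K3 surfaces, and the motivic lifting result of \cite{HuyM}, which must be extended to the twisted setting. Concretely, by Theorem \ref{thm:main1} it suffices to treat a single link in the chain: a Hodge isometry $\psi\colon H^2(T,\QQ)\cong H^2(T',\QQ)$ with $T'$ a coarse moduli space of complexes of $\alpha$-twisted sheaves on $T$ and $\psi$ induced (up to sign) by a twisted universal family $\ke$ on $T\times T'$. If each such $\psi$ lifts to an isomorphism $\hh(T)\cong\hh(T')$ of Chow motives, then composing the lifts along $S=S_0,S_1,\dots,S_n=S'$ produces an isomorphism $\hh(S)\cong\hh(S')$ inducing the given $\varphi$ on $H^2(-,\QQ)$; the sign ambiguities are harmless since $-\id$ on $\hh(S')$ is realized by an automorphism of the motive (e.g.\ coming from $-\id$ on $H^2$, which for K3 surfaces is a Hodge isometry lifting to a correspondence, or simply absorbed into the algebra of self-correspondences).

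The key step is therefore the twisted analogue of \cite{HuyM}. Given the twisted universal family $\ke\in\Db(T\times T',\alpha^{-1}\boxtimes\alpha')$ (or the appropriate twist compatible with $T'$ being a moduli space), one forms its twisted Mukai vector $v(\ke)=\ch(\ke)\sqrt{\td}$, which lives in $\CH^*(T\times T')_\QQ$ once one has a well-defined Chern character for twisted sheaves with rational coefficients — this is standard, using a choice of $B$-field lift of $\alpha$ and $\alpha'$, and does not affect the degree-$2$ part modulo the choices. This class $\Gamma\in\CH^2(T\times T')_\QQ$ is then the candidate correspondence. One must show: (i) $\Gamma$ acts as an isomorphism $\hh(T)\congpf\hh(T')$ of Chow motives, i.e.\ there is an inverse correspondence $\Gamma'\in\CH^2(T'\times T)_\QQ$ with $\Gamma'\circ\Gamma=\Delta_T$ and $\Gamma\circ\Gamma'=\Delta_{T'}$ in $\CH^2$; and (ii) the cohomological realization of $\Gamma$ on $H^2$ is (up to sign and a shift by $\NS$-classes, which can be corrected) the prescribed Hodge isometry $\psi$. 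For (ii) this is exactly Mukai's computation, reproduced in the twisted setting. For (i), the natural source of $\Gamma'$ is the twisted universal family of the inverse moduli problem, i.e.\ the kernel of the inverse derived equivalence $\Db(T',\alpha')\cong\Db(T,\alpha)$, and the identities $\Gamma'\circ\Gamma=\Delta_T$, $\Gamma\circ\Gamma'=\Delta_{T'}$ amount to the statement that the composition of a Fourier–Mukai equivalence with its inverse is the identity, tracked at the level of Chow groups rather than just cohomology.

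The main obstacle is precisely proving these identities in $\CH^2$ and not merely in $H^*$. The argument in \cite{HuyM} for untwisted moduli spaces uses the fact that for a K3 surface the Chow group $\CH^2$, modulo its Albanese-trivial part, is controlled in a rigid way (Beauville–Voisin, the Bloch–Beilinson-type filtration having length $2$): a self-correspondence of $\hh(S)$ that acts as the identity on $H^*$ and whose restriction to the "transcendental" submotive $\hh_2^{tr}(S)$ is compatible with the given data is forced to be the identity, because the relevant Hom-group of correspondences injects into cohomology on the transcendental part. One has to verify that this rigidity survives for twisted sheaves — the motive $\hh(T)$ is still the motive of the ordinary surface $T$, so the target is unchanged; what changes is only that the kernel $\ke$ is $\alpha$-twisted, which affects $v(\ke)$ only through $B$-field choices that contribute classes in $\CH^1(T)_\QQ$ and $\CH^1(T')_\QQ$, and these act on $\hh_2^{tr}$ trivially. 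So after isolating the transcendental part, twisting contributes nothing new and the \cite{HuyM} rigidity argument applies verbatim; the algebraic part $\hh_0\oplus\hh_2^{alg}\oplus\hh_4$ is handled by a direct Chow-theoretic computation since there the Chow groups are finite-dimensional (spanned by points, $\NS$-classes, and the fundamental class) and the correspondence can be adjusted by such classes to match $\psi$ exactly. Assembling (i) and (ii) over the whole chain of Theorem \ref{thm:main1} yields the isomorphism $\hh(S)\cong\hh(S')$ lifting $\varphi$, and the final implication for isogenous K3 surfaces is the special case of the theorem.
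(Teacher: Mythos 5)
Your overall route is the same as the paper's: factor the isometry via Theorem \ref{thm:main1} into links realized by twisted derived equivalences, extend the argument of \cite{HuyM} to the twisted setting by means of a twisted Chern character and Mukai vector $v(\ke)=\ch(\ke)\sqrt{\td}\in\CH^*(T\times T')_\QQ$, split $\hh(T)$ by the Kahn--Murre--Pedrini decomposition, use $v_2(\ke)$ on the transcendental part and handle the algebraic part separately (in the paper simply via $\rho(T)=\rho(T')$). However, the mechanism you propose for the key step (i) is misstated in a way that would make it fail. First, the identities $\Gamma'\circ\Gamma=\Delta_T$ and $\Gamma\circ\Gamma'=\Delta_{T'}$ in $\CH^2$ do not hold for $\Gamma=v_2(\ke)$, $\Gamma'=v_2(\ke')$ with $\ke'$ the inverse kernel: composition of correspondences between surfaces mixes degrees, so $(v(\ke')\circ v(\ke))_2=\sum_{i+j=4}v_j(\ke')\circ v_i(\ke)$, and the extra terms (involving $v_0,v_1,v_3,v_4$) are nonzero in general; they are products of classes pulled back from the factors and hence only die after projecting to $\hh^2_{\rm tr}$. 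What is true, and what is actually used, is that on $\CH^2(\cdot)_0=\CH^*(\hh^2_{\rm tr})$ the action of $v_2(\ke)$ agrees with that of the full, invertible class $v(\ke)$.

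Second, the rigidity you invoke --- that the relevant Hom-group of correspondences between transcendental motives injects into cohomology, so that a correspondence acting as the identity on $H^*$ is the identity --- is not available: by \cite{KMP}, $\Hom(\hh^2_{\rm tr}(T),\hh^2_{\rm tr}(T'))$ is a quotient of $\CH^2(T\times T')_\QQ$, and its injectivity into cohomology is a conservativity-type statement which is only known under hypotheses such as Kimura finite-dimensionality (this is exactly the conditional Proposition in Section \ref{sec:comments}, which the unconditional theorem must avoid). The argument of \cite{HuyM}, which the paper carries over verbatim once the twisted Chern character, its multiplicativity, and Grothendieck--Riemann--Roch are in place, runs instead as follows: $v(\ke)_*$ is an isomorphism of ungraded Chow groups, its action on $\CH^2(\cdot)_0$ coincides with that of $v_2(\ke)$, this remains true after arbitrary base field extension, and then a version of Manin's identity principle upgrades $v_2(\ke)_*\colon\hh^2_{\rm tr}(T)\to\hh^2_{\rm tr}(T')$ to an isomorphism of motives. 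With that replacement (and your correct observation that the $B$-field ambiguities only contribute divisor and point classes, which act trivially on $\hh^2_{\rm tr}$), your plan coincides with the paper's proof; the remaining bookkeeping about signs and matching $\varphi$ on the algebraic part is handled as you indicate.
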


Note that by Witt's theorem, there exists a Hodge isometry $H^2(S,\QQ)\cong H^2(S',\QQ)$ if and only
if there exists a Hodge isometry $T(S)\otimes\QQ\cong T(S')\otimes\QQ$. For integral coefficients this fails,
which results in two global Torelli theorems, the classical  and the derived,\footnote{
(i) $H^2(S,\ZZ)\cong H^2(S',\ZZ) \text{\rm ~(Hodge isometry)}  \Leftrightarrow S\cong S'$ (isomorphism);\\
\phantom{HGi}(ii) $T(S)\cong T(S') \text{\rm ~(Hodge isometry)} \Leftrightarrow \Db(S)\cong\Db(S')$ (exact linear equivalence).}
  see \cite{HuyFM,HuySeat,HuyK3} for
references.

\medskip

The following strengthening of Theorem \ref{thm:main2} is expected. It relaxes the assumption from 
the existence of a Hodge isometry to the existence of a simple isomorphism
of Hodge structures, so one that is not necessarily compatible with the intersection pairing (cf.\ Section \ref{sec:comments}):

\begin{conj}[Motivic global Torelli theorem]\label{conj}
For two complex projective K3 surfaces $S$ and $S'$ the following conditions are equivalent:\\
{\rm (i)} $H^2(S,\QQ)\cong H^2(S',\QQ)$ (isomorphism of rational Hodge structures);\\
{\rm (ii)} $\hh(S)\cong\hh(S')$ (isomorphism of  Chow
motives).
\end{conj}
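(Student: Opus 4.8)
The implication (ii) $\Rightarrow$ (i) is immediate: the Betti realization of an isomorphism $\hh(S)\cong\hh(S')$ is a degree-preserving isomorphism of graded rational Hodge structures $H^\ast(S,\QQ)\cong H^\ast(S',\QQ)$, which in degree $2$ gives the assertion in (i). The content of the conjecture is the converse, and the plan is to reduce it to Theorem \ref{thm:main2} together with one further, genuinely harder, algebraicity input.

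The first step is to pass to the transcendental motive. By the refined Chow--K\"unneth decomposition of the motive of a K3 surface there is a splitting $\hh(S)\cong\mathbf 1\oplus\LL^{\oplus\rho(S)}\oplus t_2(S)\oplus\LL^2$, where $\LL$ denotes the Lefschetz motive, $\rho(S)=\rk\NS(S)$, and $t_2(S)$ is the transcendental motive, characterised by $H^\ast(t_2(S))=T(S)\otimes\QQ$. As there are no nonzero morphisms between $t_2(S)$ and the Tate summands, nor between Tate summands of different weight, one has $\hh(S)\cong\hh(S')$ if and only if $\rho(S)=\rho(S')$ and $t_2(S)\cong t_2(S')$. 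On the other hand, an isomorphism of rational Hodge structures $H^2(S,\QQ)\congpf H^2(S',\QQ)$ sends $(1,1)$-classes to $(1,1)$-classes, hence restricts to an isomorphism $\NS(S)\otimes\QQ\cong\NS(S')\otimes\QQ$, so that $\rho(S)=\rho(S')$; and since $T(S)\otimes\QQ$ is intrinsically the smallest sub-Hodge structure of $H^2(S,\QQ)$ whose complexification contains $H^{2,0}(S)$, it also restricts to a Hodge isomorphism $\psi\colon T(S)\otimes\QQ\congpf T(S')\otimes\QQ$. Hence it suffices to lift any such $\psi$ to an isomorphism $t_2(S)\cong t_2(S')$ of Chow motives.

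The second step isolates the part already covered by Theorem \ref{thm:main2}. If $\psi$ can be chosen to be a Hodge \emph{isometry}, then Witt's theorem (quoted above) provides a Hodge isometry $H^2(S,\QQ)\cong H^2(S',\QQ)$, which Theorem \ref{thm:main2} lifts to $\hh(S)\cong\hh(S')$, and a fortiori to $t_2(S)\cong t_2(S')$. In general one must account for the discrepancy between $\psi$ and a Hodge isometry. By Zarhin's theorem the endomorphism algebra $E=\End_{\rm Hdg}(T(S)\otimes\QQ)$ is a field, and the group of Hodge automorphisms of $T(S)\otimes\QQ$ is $E^\times$; composing $\psi$ with these, the question becomes whether every Hodge homomorphism $T(S)\otimes\QQ\to T(S')\otimes\QQ$ is \emph{algebraic}, i.e.\ induced by a class in $\CH^2(S\times S')_\QQ$, and whether, as in \cite{HuyM}, such a class together with its transpose can be arranged to compose to a nonzero multiple of the identity on $t_2(S)$ and on $t_2(S')$.

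This last point is the main obstacle, and it lies outside the reach of the methods of this paper. The moduli-space and twisted-sheaf correspondences underlying Theorems \ref{thm:main1} and \ref{thm:main2} preserve the transcendental lattice --- integrally, in Mukai's situation --- so they only ever produce Hodge isometries and never alter the similitude class of the polarization on $T(S)\otimes\QQ$; already the case $E=\QQ$, where $\psi^\ast q_{S'}=\lambda\,q_S$ for a non-square $\lambda\in\QQ_{>0}$ and no Hodge isometry $T(S)\otimes\QQ\cong T(S')\otimes\QQ$ exists at all, cannot be treated this way. What is really needed is the Hodge conjecture for the relevant classes on $S\times S'$ (equivalently, algebraicity of the $E$-action on $t_2(S)$ over $S\times S$). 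The natural tool is the Kuga--Satake correspondence, which transports the problem to Hodge endomorphisms of $H^1$ of the associated abelian variety, where Deligne's theorem and Andr\'e's theory of motivated cycles come close; but upgrading ``motivated'' to ``algebraic'' remains open, as does the Hodge conjecture for self-products of K3 surfaces. Finally, even once algebraicity is available, deducing an actual isomorphism of Chow motives rather than one modulo homological equivalence would need either finite-dimensionality of $t_2(S)$ in the sense of Kimura--O'Sullivan --- open for general K3 surfaces --- or the explicit bookkeeping of composed correspondences as in \cite{HuyM}, so that the concluding argument should closely parallel the one used there for Theorem \ref{thm:main2}.
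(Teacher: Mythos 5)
You should be clear that the statement you are addressing is Conjecture \ref{conj}: the paper does not prove it, and your text, quite rightly, does not claim to either. What the paper offers is only conditional evidence, in Section \ref{sec:comments}: if the Hodge conjecture holds for $S\times S'$ and $\hh(S)$, $\hh(S')$ are Kimura finite-dimensional, then any Hodge isomorphism $H^2(S,\QQ)\cong H^2(S',\QQ)$ is induced by a class $\gamma\in\CH^2(S\times S')_\QQ$, and since finite-dimensionality implies conservativity, $\gamma_*\colon\hh^2(S)\to\hh^2(S')$ is an isomorphism because its numerical realization $\varphi$ is; together with the easy direction this gives the equivalence of (i) and (ii) under those hypotheses. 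Your proposal lands on exactly these two inputs, so in substance it agrees with the paper's treatment; your easy direction (ii) $\Rightarrow$ (i) is also fine, since a degree-zero correspondence preserves the cohomological grading and induces morphisms of Hodge structures. The differences are matters of economy: the paper's conditional argument does not need your detour through the Chow--K\"unneth decomposition, the transcendental motive $t_2$, the endomorphism field $E^\times$, or Kuga--Satake, and it does not need to separate the isometric case (Theorem \ref{thm:main2} serves to make that case unconditional, not as a step in the conditional proof); conservativity applied directly to $\gamma_*$ replaces both the ``transpose composes to a multiple of the identity'' bookkeeping and the discussion of similitude classes. Your observation that the twisted-sheaf correspondences of Theorems \ref{thm:main1} and \ref{thm:main2} only ever produce Hodge isometries, and hence cannot reach a non-isometric Hodge isomorphism, is a fair articulation of why the statement remains a conjecture rather than a theorem.
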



Theorem \ref{thm:main1} has the following immediate consequences first proved in \cite{Buskin}, 
see Proposition \ref{prop:thm} and Remark \ref{rem:Schlic}.

\begin{cor}[Buskin]\label{cor:Buskin}
{\rm (i)} Any Hodge isometry $\varphi\colon H^2(S,\QQ)\congpf H^2(S',\QQ)$ between complex projective K3 surfaces
yields an algebraic class $[\varphi]\in H^{2,2}(S\times S',\QQ)$.

{\rm (ii)} If $S$ is a complex projective K3 surface with complex multiplication, i.e.\ ${\rm End}_{\rm Hdg}(T(S)\otimes\QQ)$ is
a CM-field, then the Hodge conjecture holds for $S\times S$, cf.\ Remark \ref{rem:Schlic}.
\end{cor}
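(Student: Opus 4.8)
The plan is to derive both statements directly from Theorem~\ref{thm:main1} together with the fact that algebraic correspondences are closed under composition (and that composition of correspondences induces composition of the associated maps on cohomology). For part~(i), Theorem~\ref{thm:main1} reduces us to a single elementary step: a Hodge isometry $\varphi_i\colon H^2(S_{i-1},\QQ)\cong H^2(S_i,\QQ)$ induced, up to sign, by a twisted universal family $\ke_i$ of complexes of twisted sheaves on $S_{i-1}\times S_i$ is realised by an algebraic cycle class on the product; granting this, $[\varphi]=[\varphi_n]\circ\cdots\circ[\varphi_1]$ is a composition of algebraic correspondences inducing $\varphi$ on $H^2$, hence algebraic, the overall sign being irrelevant. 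To establish the elementary step I would pass to Brauer--Severi varieties $P_{i-1}\to S_{i-1}$ and $P_i\to S_i$ trivialising the relevant Brauer classes; the pull-back of $\ke_i$ is then a genuine bounded complex of coherent sheaves on the projective bundle $P_{i-1}\times P_i\to S_{i-1}\times S_i$, so its Chern character is algebraic, and pushing it forward, extracting the appropriate Künneth component and performing the Mukai-type modification $\ch(\ke_i)\,\sqrt{\td}$ (which only contributes algebraic correction terms) yields an algebraic class on $S_{i-1}\times S_i$ inducing $\pm\varphi_i$ on $H^2$. The delicate point here is that the twisted Chern character a priori involves a $B$-field term that need not be algebraic, so one must verify that the Künneth component governing the map $H^2\to H^2$ is independent of the chosen $B$-field; this is precisely the input supplied by the twisted extension of \cite{HuyM}.

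For part~(ii) I would first reduce the Hodge conjecture for the fourfold $X=S\times S$ to its middle degree. The classes in $H^{1,1}(X,\QQ)=\NS(X)_\QQ$ are algebraic by the Lefschetz theorem on $(1,1)$-classes; the Hodge classes in $H^{3,3}(X,\QQ)$ are, by Künneth, sums of a divisor class on one factor with the point class on the other, hence algebraic; and $H^{0,0}$, $H^{4,4}$ are trivial. In degree~$4$, the Künneth formula together with $H^1(S)=H^3(S)=0$ and $T(S)_\QQ\cap H^{1,1}(S)=0$ leave only the Hodge classes inside $\NS(S)_\QQ\otimes\NS(S)_\QQ$, which are products of divisor classes and hence algebraic, and those inside $T(S)_\QQ\otimes T(S)_\QQ$, which by Poincaré duality correspond bijectively to the Hodge endomorphisms of $T(S)_\QQ$, i.e.\ to the CM field $E={\rm End}_{\rm Hdg}(T(S)\otimes\QQ)$. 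Writing the intersection form on the $E$-module $T(S)_\QQ$ as $(x,y)\mapsto{\rm Tr}_{E/\QQ}(h(x,y))$ for a form $h$ that is Hermitian with respect to the complex conjugation of $E$, one sees that multiplication by an element $e$ of the norm-one subgroup $E^1=\{e\in E\mid e\bar e=1\}$ is a Hodge isometry of $T(S)_\QQ$; extending it by the identity on the orthogonal complement $\NS(S)_\QQ$ gives a Hodge isometry of $H^2(S,\QQ)$ whose graph class is algebraic by part~(i). Composing on both sides with the orthogonal projector onto $T(S)_\QQ$ --- an algebraic cycle, being $\Delta_S$ minus the Néron--Severi projector $\sum_{i,j}(G^{-1})_{ij}\,\ell_i\otimes\ell_j$ built from a basis $\ell_i$ of $\NS(S)_\QQ$ and its intersection matrix $G$ --- produces an algebraic class inducing $e$ on $T(S)_\QQ$ and $0$ on $\NS(S)_\QQ$. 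It then suffices to invoke the elementary fact that the norm-one elements of a CM field span it over $\QQ$: a suitable $\QQ$-linear combination of these classes realises any prescribed element of $E$, and with the $\NS\otimes\NS$-part this exhausts the Hodge classes on $X$.

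The main obstacle is the elementary step of part~(i): turning a \emph{twisted} universal complex into a genuine \emph{algebraic} cycle class on the product that induces the correct Hodge isometry on $H^2$, which amounts to controlling the a priori non-algebraic $B$-field contribution to the twisted Chern character. Once this is secured, the remaining ingredients --- composability of correspondences, the Lefschetz-type and Künneth reductions in part~(ii), the spanning of a CM field by its norm-one elements, and the algebraicity of the Néron--Severi projector --- are all routine.
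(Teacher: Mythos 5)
Your global strategy for (i) --- decompose via Theorem \ref{thm:main1} and compose algebraic correspondences --- and your whole treatment of (ii) (K\"unneth reduction, identification of the Hodge classes in $T(S)_\QQ\otimes T(S)_\QQ$ with ${\rm End}_{\rm Hdg}(T(S)_\QQ)$, spanning of the CM field by norm-one elements, each extended by the identity on $\NS(S)_\QQ$ and cut down by the algebraic transcendental projector) coincide with what the paper does via Proposition \ref{prop:thm} and Remark \ref{rem:Schlic}. The genuine gap is in the one step you call elementary: proving that a single reflective isometry induced by a twisted kernel $\ke$ is realized by an algebraic class inducing $\pm\varphi$ on $H^2$. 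Your Brauer--Severi detour does not settle it: the pull-back of an $\alpha$-twisted complex to the Brauer--Severi variety is not yet an honest complex (one must tensor with a tautological twisted sheaf trivializing the pulled-back cocycle, which modifies the Chern character and then has to be tracked through the push-forward), and, more importantly, the point you flag as ``delicate'' --- that the $B$-field contribution does not affect the induced map on $H^2$ --- is exactly what has to be proved, and the ``twisted extension of \cite{HuyM}'' you invoke does not contain it: \cite{HuyM} and Section 2 of this paper concern the motivic/Chow-theoretic statement, not this cohomological identity. Your proposed verification target is also not quite the right statement, since the $(2,2)$-K\"unneth component of the twisted class $\ch^{-B,B'}(\ke)\cdot\sqrt{\td}$ genuinely depends on $B,B'$.

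The paper closes this gap by a mechanism your proposal never uses: the twisted Chern character is pinned down by $\ch^{-B,B'}(\ke)^n=\exp(-b,b')\cdot\ch(\ke^{\otimes n})$, where $\ke^{\otimes n}$ is canonically untwisted because $b=nB$ and $b'=nB'$ are integral, and the commutativity of diagram (\ref{extphi2}), i.e.\ $\exp(B')\circ\varphi=\tilde\varphi\circ\exp(B)$, makes the $\exp(B)$- and $\exp(B')$-factors absorb precisely the $B$-field terms. This yields $\varphi(x)=\bigl(\sqrt[n]{\ch(\ke^{\otimes n})}\cdot\sqrt{\td(S\times S')}\bigr)_*(x)$, so $[\varphi]$ is the $(2,2)$-part of a manifestly algebraic class, with no auxiliary Brauer--Severi geometry needed. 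Once this identity (i.e.\ Proposition \ref{prop:thm}) is in place, part (i) follows by your composition argument, and your argument for part (ii) goes through as written.
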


The approach to {\v{S}}afarevi{\v{c}}'s conjecture presented here differs from the one in \cite{Buskin}. It is more algebraic in spirit, which allows
for the motivic interpretation of the conjecture as presented in Theorem \ref{thm:main2}. Central to his argument,
Buskin proves `twistor path connectedness' of the moduli space of pairs of K3 surfaces together with an isogeny between
them to reduce the situation to coarse moduli spaces of untwisted bundles. In our proof, cyclic isogenies are lifted directly
to the level of derived categories of twisted K3 surfaces \cite{HuySt}, thus avoiding analytic K3 surfaces and
global moduli considerations. The notion of Hodge structures of twisted K3 surfaces introduced in \cite{HuyCY,HuySt1}
provides an efficient tool to deal with the lattice theoretic parts and, in particular, replaces Buskin's $\kappa$-classes.

\vskip0.6cm

\noindent
{\bf Acknowledgements:} I would like to heartily thank the participants of the inspiring IC Geo\-metry Seminar on 
Buskin's paper and especially Lenny Taelman for his energy in organizing it. I am grateful to Fran\c{c}ois Charles,
Lenny Taelman, and Andrey Soldatenkov for comments on a first version of this paper.
Many thanks to Rahul Pandharipande for an invitation to the ETH, Zurich, where the main idea took shape.
Hospitality and financial support of the  Erwin Schr\"odinger Institute, where the first version of this paper was
completed, is gratefully acknowledged.

%
%
%

\section{Derived equivalence of isogenous K3 surfaces}

This section is the technical heart of the paper.  We show how to lift rational Hodge isometries to
exact linear equivalences between bounded derived categories of twisted sheaves and use this
to prove {\v{S}}afarevi{\v{c}}'s conjecture. The first reduction step to cyclic Hodge isometries is taken
from \cite{Buskin}. The rest of the argument uses  twisted Chern characters and the main result of \cite{HuySt},
instead of $\kappa$-classes and twistor space deformations. A brief comparison of the two approaches is
included.

\subsection{} As in \cite{Buskin}, we apply the classical Cartan--Dieudonn\'e theorem to reduce
{\v{S}}afarevi{\v{c}}'s conjecture to an easier case. Recall that for any lattice $\Lambda$ and any rational isometry $\varphi\colon\Lambda_\QQ\congpf
\Lambda_\QQ$, there exist  $b_i\in\Lambda_\QQ$, $i=1,\ldots,k$, with $(b_i)^2\ne0$, such that
$\varphi$ equals the composition $$\varphi=s_{b_1}\circ\cdots\circ s_{b_k}$$ of reflections $s_{b_i}\colon
x\mapsto x-\frac{2(x.b_i)}{(b_i)^2}b_i$. Note that the number of reflections can be bounded by
$k\leq \rk\,\Lambda$. Clearly, we may assume that all $b_i\in\Lambda_\QQ$ are contained in the lattice $\Lambda$ and that they are actually primitive elements of $\Lambda$.\footnote{Buskin in \cite{Buskin} only uses the property of a reflection $\varphi=s_\delta$ to be \emph{cyclic},
i.e.\ to have the property that $\varphi^{-1}(\Lambda)\cap\Lambda \subset\Lambda$ and $\varphi(\Lambda)\cap \Lambda\subset\Lambda$ have cyclic quotients. We shall really have to work with reflections.}


Combining this with the surjectivity of the period map, one finds that any Hodge isometry $H^2(S,\QQ)\congpf H^2(S',\QQ)$
can be written as a composition of Hodge isometries
\begin{equation}\label{eqn:compcycl}
H^2(S=S_0,\QQ)\congpf H^2(S_{1},\QQ)\congpf \cdots \congpf H^2(S_n=S',\QQ),
\end{equation}
such that after choosing markings $\Lambda\cong H^2(S_i,\ZZ)$ and $\Lambda\cong H^2(S_{i+1},\ZZ)$
the Hodge isometry $H^2(S_i,\QQ)\congpf H^2(S_{i+1},\QQ)$ is of the form
$s_{b_i}$. We call a Hodge isometry of this type \emph{reflective}.
Thus, Theorem \ref{thm:main1} is a consequence of the following result which will be proved in this section.

\begin{thm}\label{thm:main1bis}
Assume $\varphi\colon H^2(S,\QQ)\congpf H^2(S',\QQ)$ is a reflective Hodge isometry. Then $S'$ is a coarse moduli space of complexes
of twisted coherent sheaves on $S$ and $\varphi$ is (up to sign)  induced by a twisted universal family of complexes of twisted sheaves.
\end{thm}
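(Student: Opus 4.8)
The plan is to realize the reflective isometry $\varphi=s_b$ by an equivalence between bounded derived categories of \emph{twisted} K3 surfaces and to read it off from the Fourier--Mukai kernel, which then serves as the required twisted universal family. After the reductions already made, I may assume given markings $\Lambda\cong H^2(S,\ZZ)$, $\Lambda\cong H^2(S',\ZZ)$, a primitive class $b\in\Lambda$ with $b^2=2d\neq0$, and that $\varphi=s_b$ is a Hodge isometry (so that, by surjectivity of the period map, the period of $S'$ is $[s_b(\sigma_S)]$). Passing to Mukai lattices, $\varphi$ extends to the rational Hodge isometry $\widetilde\varphi=\id_{H^0}\oplus s_b\oplus\id_{H^4}$ between the ordinary rational Mukai lattices $\widetilde H(S,\QQ)$ and $\widetilde H(S',\QQ)$; its $(H^2,H^2)$-K\"unneth block is exactly $s_b$, but for $b^2\neq\pm2$ the map $\widetilde\varphi$ is not integral on the untwisted Mukai lattices, and conjugating by exponentials of $B$-fields does not by itself repair this, so the isometry we use must genuinely differ from $\widetilde\varphi$ away from the $H^2$-block.

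The technical core is the construction of the twisting. I want to produce $B\in H^2(S,\QQ)$, $B'\in H^2(S',\QQ)$, with associated Brauer classes $\alpha=\alpha_B\in\Br(S)$, $\alpha'=\alpha_{B'}\in\Br(S')$, together with an orientation-preserving integral Hodge isometry of twisted Mukai lattices $\Psi\colon\widetilde H(S,\alpha,\ZZ)\congpf\widetilde H(S',\alpha',\ZZ)$ whose $(H^2,H^2)$-block still equals $s_b$. Here the formalism of Hodge structures of twisted K3 surfaces from \cite{HuyCY,HuySt1} replaces Buskin's $\kappa$-classes. The guiding principle is that $B$ and $B'$ should be supported, via the markings, on the line through $b$, with denominators matched to $d=b^2/2$, so that the cross term $B\boxtimes B'$ in the twisted Chern character of the kernel supplies precisely the class $\tfrac1d\,b\boxtimes b$ by which $s_b$ differs from the identity correspondence. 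Concretely, one first upgrades $s_b|_{T(S)\otimes\QQ}$ to an \emph{integral} Hodge isometry between the twisted transcendental lattices $\widetilde T(S,\alpha)$ and $\widetilde T(S',\alpha')$, then extends it to the full twisted Mukai lattices by the usual discriminant-form arguments, composing along the way with an integral isometry that is trivial on the $(H^2,H^2)$-block (for instance one acting only on a hyperbolic plane complementary to $H^2$ inside the Mukai lattice) to secure both integrality on the prescribed lattices and the correct orientation; the Hodge condition forces a relation between $B$ and $B'$ coming from $s_b(\sigma_S)\in\CC\,\sigma_{S'}$. \emph{I expect this step to be the main obstacle}: realizing the reflection $s_b$ integrally after a Brauer twist while keeping its action on $H^2$ under control is the genuinely new input, and it is where one really uses that $\varphi$ is a reflection and not merely cyclic.

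Granting $\Psi$, the remainder is machinery. The twisted derived global Torelli theorem of \cite{HuySt} lifts $\Psi$ to an exact $\CC$-linear equivalence $\Phi\colon\Db(S,\alpha)\congpf\Db(S',\alpha')$ realizing $\pm\Psi$ on cohomology, and by the twisted form of Orlov's representability theorem $\Phi\cong\Phi_\ke$ for a kernel $\ke\in\Db(S\times S',\alpha^{-1}\boxtimes\alpha')$. The objects $\ke_{s'}:=\Phi_\ke^{-1}(k(s'))$, $s'\in S'$, are complexes of $\alpha$-twisted coherent sheaves on $S$; they are pairwise orthogonal, $\Hom(\ke_{s'},\ke_{s''}[i])\cong\Hom(k(s'),k(s'')[i])$, and form the flat family over $S'$ given by the kernel of $\Phi_\ke^{-1}$, so that $S'$ is a fine --- in particular coarse --- moduli space of complexes of $\alpha$-twisted sheaves on $S$, with twisted universal family $\ke$ (up to transpose and dual). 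Finally, the Hodge isometry on second cohomology attached to this universal family in Mukai's sense is the $(2,2)$-K\"unneth component of the twisted Chern character $\ch^B(\ke)$; since the corrections coming from $\sqrt{\td}$ contribute only in degrees $0$ and $4$, this component coincides with the $(H^2,H^2)$-block of the cohomological Fourier--Mukai transform $\Phi_\ke^H=\pm\Psi$, which equals $\pm s_b=\pm\varphi$. This proves Theorem \ref{thm:main1bis}.
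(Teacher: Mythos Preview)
Your overall architecture is exactly that of the paper: choose $B$-fields on $S$ and $S'$, produce an orientation-preserving integral Hodge isometry of the twisted Mukai lattices, invoke the main result of \cite{HuySt} to lift it to an equivalence $\Db(S,\alpha)\cong\Db(S',\alpha')$, and read off $\varphi$ from the twisted Chern character of the kernel. The difficulty is that you have not actually carried out the central step; you explicitly flag the construction of the integral $\Psi$ as ``the main obstacle'' and propose to attack it by lifting $s_b$ on rational transcendental lattices and then extending via discriminant-form arguments. The paper does \emph{not} proceed this way. It writes down $\tilde\varphi$ by an explicit closed formula: with $n\coloneqq(b)^2/2$, $B\coloneqq b/n$, $b'\coloneqq-\varphi(b)$, $B'\coloneqq-\varphi(B)$, one sets
\[
\tilde\varphi(r,z,s)=\bigl(n((B.z)-r/n-s),\ \varphi(z)+((B.z)-s)\,b',\ -s\bigr),
\]
checks directly that this is an integral isometry of $\widetilde H(S,\ZZ)\to\widetilde H(S',\ZZ)$ swapping the two isotropic generators of the orthogonal complement $U(n)$ of $\exp(B)(H^2(S,\ZZ)_B)$, and that it is a Hodge isometry $\widetilde H(S,B,\ZZ)\cong\widetilde H(S',B',\ZZ)$. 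No discriminant-form extension is needed; the formula does all the work, and this is precisely where the hypothesis that $\varphi$ is a reflection (not merely cyclic) is used.

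There is also a genuine error in your desiderata. You ask that the $(H^2,H^2)$-block of the integral $\Psi$ equal $s_b$. This is incompatible with integrality: the underlying lattice of $\widetilde H(S,B,\ZZ)$ is still $\widetilde H(S,\ZZ)$, so $\Psi(0,z,0)$ must be integral for every $z\in H^2(S,\ZZ)$, whereas $s_b(z)=z-\tfrac{(z.b)}{n}b$ is not. In the paper's construction one computes (using $b'=b$ under the markings, since $s_b(b)=-b$) that the $(H^2,H^2)$-block of $\tilde\varphi$ is $z\mapsto\varphi(z)+(B.z)b'=z$, i.e.\ the \emph{identity}. The reflection $s_b$ is not recovered as a block of $\tilde\varphi$ but rather through the commutative square $\tilde\varphi\circ\exp(B)=\exp(B')\circ\varphi$ on $H^2(S,\ZZ)_B$; on the level of the kernel this is the relation $\ch^{-B,B'}(\ke)^n=\exp(-b,b')\cdot\ch(\ke^{\otimes n})$, so that $[\varphi]$ appears as the $(2,2)$-component of the \emph{untwisted} class $\sqrt[n]{\ch(\ke^{\otimes n})}\cdot\sqrt{\td(S\times S')}$, not of $\ch^{-B,B'}(\ke)\cdot\sqrt{\td}$ itself. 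Your final paragraph conflates these two classes. Once this is straightened out, your concluding moduli-theoretic reading of the equivalence is correct.
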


In other words, we claim that $\varphi$ (up to sign) is induced by the Fourier--Mukai kernel $\ke$ of an exact linear equivalence
$\Phi_\ke\colon\Db(S,\alpha)\congpf\Db(S',\alpha')$
for suitable Brauer classes $\alpha\in \Br(S)$ and $\alpha'\in \Br(S')$. Here, $\ke$ is an object in the bounded
derived category  $\Db(S\times S',\alpha^{-1}\boxtimes\alpha')$ of $\alpha^{-1}\boxtimes\alpha'$-twisted coherent sheaves on $S\times S'$,
see below for details on the action on cohomology.

\subsection{}  We begin with a few explicit lattice computations. 
Let $\varphi=s_b\colon \Lambda_\QQ\congpf\Lambda_\QQ$ be a reflection with $b\in\Lambda$ primitive. Then, for $x\in \Lambda$, the image $\varphi(x)\in\Lambda_\QQ$ is contained in $\Lambda$ if and only if $(x.b)$ is divisible by $n\coloneqq (b)^2/2$.
So, if we let $B\coloneqq \frac{1}{n}b\in\Lambda_\QQ$, then $\varphi$ induces an isometry
of $\Lambda_B\coloneqq \{x\in\Lambda\mid (x.B)\in \ZZ\}\subset\Lambda$. This is a finite index sublattice with a cyclic quotient of order
$n$. Note that $\varphi(z)=z-(z.B)b$ and, hence, $\varphi(B)=-B$. Next consider 
\begin{equation}\label{primemb}
\exp(B)\colon \xymatrix{\Lambda_B\ar@{^(->}[r]&\widetilde\Lambda}\coloneqq\Lambda\oplus U,~ÊÊx\mapsto x+(B.x)f,
\end{equation}
which is a primitive embedding of lattices.
Here, $U$ is the hyperbolic plane with the standard isotropic basis $e,f$ with $(e.f)=-1$. The sign is inserted to make
$U$ naturally isomorphic to $H^0(S,\ZZ)\oplus H^4(S,\ZZ)\cong\ZZ\cdot e\oplus\ZZ\cdot f$ endowed with the Mukai pairing.  The orthogonal complement
$(\exp(B)(\Lambda_B))^\perp\subset\widetilde \Lambda$ is the lattice spanned by the isotropic vectors
$b+ne+f$ and $-f$, which is thus isomorphic to the twisted hyperbolic plane $U(n)$. The
isometry $\varphi\colon\Lambda_B\congpf\Lambda_B$ extends to an
isometry $\tilde\varphi\colon\widetilde\Lambda\congpf\widetilde\Lambda$, i.e.\ there exists a commutative diagram of the form
\begin{equation}\label{extphi}
\xymatrix{\Lambda_B\ar[d]^\cong_\varphi\ar@{^(->}[rr]^{\exp(B)}&&\widetilde\Lambda\ar[d]_{\tilde\varphi}^\cong\\
\Lambda_B\ar@{^(->}[rr]_{\exp(B)}&&\widetilde\Lambda.}
\end{equation}
The  extension can be given explicitly as
$$\tilde\varphi(z+re+sf)\coloneqq\varphi(z)+((B.z)-s)b+n((B.z)-(r/n)-s)e-sf.$$ The compatibility
with $\varphi$ is easily shown using
$(B.z)=-(B.\varphi(z))$. On $(\exp(B)(\Lambda_B))^\perp \cong U(n)$, $\tilde\varphi$ interchanges the two basis vectors
$b+ne+f$ and $-f$. This shows that $\tilde\varphi$ is indeed an isometry. An explicit computation shows that
indeed $\tilde\varphi(\widetilde\Lambda)=\widetilde\Lambda$.
\smallskip

Let us now apply this to a reflective Hodge isometry $\varphi\colon H^2(S,\QQ)\congpf H^2(S',\QQ)$.
The analogue of $b\in \Lambda$ and $B\in\Lambda_\QQ$ in the above setting are now classes $b\in H^2(S,\ZZ)$ and $B=(1/n)b\in H^2(S,\QQ)$.
We set $b'\coloneqq-\varphi(b)\in H^2(S',\ZZ)$ and $B'\coloneqq -\varphi(B)\in H^2(S',\QQ)$.
Then the Hodge isometry
$\varphi$ of rational Hodge structures induces a Hodge isometry of integral Hodge structures
$$\varphi\colon H^2(S,\ZZ)_B\coloneqq\{x\in H^2(S,\ZZ)\mid (x.B)\in\ZZ\}\congpf H(S',\ZZ)_{B'}\coloneqq\{x'\in H^2(S',\ZZ)\mid (x'.B')\in\ZZ\}.$$
Furthermore, (\ref{primemb}) becomes the primitive embedding of lattices
$$\exp(B)\colon\xymatrix{H^2(S,\ZZ)_B\ar@{^(->}[r]& \widetilde H(S,\ZZ),}~x\mapsto
x+x\wedge B,$$where $\widetilde H(S,\ZZ)$ is the Mukai lattice, i.e.\ the lattice $H^*(S,\ZZ)$ with a sign change in the pairing of $H^0$ and $H^4$. The analogue of (\ref{extphi}) is the commutative diagram
\begin{equation}\label{extphi2}
\xymatrix{H^2(S,\ZZ)_B\ar[d]^\cong_\varphi\ar@{^(->}[rr]^{\exp(B)}&&\widetilde H(S,\ZZ)\ar[d]_{\tilde\varphi}^\cong\\
H^2(S',\ZZ)_{B'}\ar@{^(->}[rr]_{\exp(B')}&&\widetilde H(S',\ZZ).}
\end{equation}
with $\tilde\varphi(r,z,s)=\left(n((B.z)-(r/n)-s),\varphi(z)+((B.z)-s)b',-s\right)$.

The Hodge structure of $H^2(S,\ZZ)_B$, inherited from $H^2(S,\ZZ)$, induces a natural Hodge structure of weight two
on the Mukai lattice $\widetilde H(S,\ZZ)$. The lattice $\widetilde H(S,\ZZ)$ endowed with this Hodge structure is
denoted $\widetilde H(S,B,\ZZ)$. Explicitly, the $(2,0)$-part of $\widetilde H(S,B,\ZZ)$ is
spanned by $\sigma+\sigma\wedge B\in H^{2}(S,\CC)\oplus H^4(S,\CC)$ for any $0\ne \sigma\in H^{2,0}(S)\subset H^2(S,\CC)$
and the orthogonal complement $(\exp(B)(H^2(S,\ZZ)_B))^\perp\subset\widetilde H(S,B,\ZZ)$ is of type $(1,1)$. With the analogous convention for $S'$, the isometry $\tilde\varphi$ can be viewed as a Hodge isometry\begin{equation}\label{eqn:Hodgeisoready}
\tilde\varphi\colon \widetilde H(S,B,\ZZ)\congpf \widetilde H(S',B',\ZZ)
\end{equation}
that commutes with $\varphi$ via $\exp(B)$ and $\exp(B')$.

%

If $\tilde\varphi$ does not preserve the natural orientation of the
four positive directions in the  Mukai lattice, then compose $\tilde \varphi$ with the Hodge isometry
$${\rm id}_{H^0}\oplus( -{\rm id}_{H^2})\oplus {\rm id}_{H^4}\colon \widetilde H(S',B',\ZZ)\congpf \widetilde H(S',-B',\ZZ).$$
This amounts to changing $\varphi$ by a sign which does not affect our problem.

\subsection{}\label{sec:kappa}
We are now ready to evoke the main result of \cite{HuySt} which asserts that
any orientation preserving Hodge isometry (\ref{eqn:Hodgeisoready}) can be lifted to
an exact equivalence 
\begin{equation}\label{eqn:Equi}
\Phi\colon \Db(S,\alpha)\congpf \Db(S',\alpha').
\end{equation}
Here, $\alpha\in\Br(S)$ and $\alpha'\in\Br(S')$ are the Brauer classes induced by
$B$ and $B'$ via the exponential sequence $H^2(S,\QQ)\to H^2(S,\ko_S)\to H^2(S,\ko_S^\ast)$.
The order of both classes divides $n$. However, although $H\subset H^2(S,\ZZ)$ and
$H'\subset H^2(S',\ZZ)$ are subgroups of the same index $n$, in general ${\rm ord}(\alpha)\ne{\rm ord}(\alpha')$, e.g.\
for $S'$ a non-fine moduli space of untwisted sheaves one has $\alpha=1$ and ${\rm ord}(\alpha')>1$.
Let us briefly recall what it means that `$\Phi$ lifts $\tilde\varphi$' and what it implies for $\varphi$.

One knows that any exact linear equivalence (\ref{eqn:Equi}) is of Fourier--Mukai type \cite{CanSt}, i.e.\ of the
form $\Phi\cong\Phi_\ke\colon E\mapsto p_*(q^*E\otimes\ke)$ for some $\ke$ in the bounded derived category
$\Db(S\times S',\alpha^{-1}\boxtimes\alpha')$ of $\alpha^{-1}\boxtimes\alpha'$-twisted coherent sheaves on $S\times S'$ and
$p,q$ the two projections.
The induced action $\Phi_\ke^{B,B'}\colon\widetilde H(S,B,\ZZ)\congpf \widetilde H(S',B',\ZZ)$ is the correspondence given by
the class $\ch^{-B,B'}(\ke)\cdot\sqrt{\td(S\times S')}$, where the twisted Chern
character is determined by the property $\ch^{-B,B'}(\ke)^n=\exp(-b,b')\cdot\ch(\ke^{\otimes n})$.
As $b=nB$ and $b'=nB'$ are both integral classes, $\ke^{\otimes n}$ is naturally untwisted and its Chern character
is well defined.\footnote{We refer to \cite{HuySt1,HuySt,HuySeat} and Section \ref{sec:Chern} for the technical details. For example,
one actually has to choose cocyles $b=\{b_{ijk}\}$, $B=\{B_{ijk}\coloneqq(1/n)b_{ijk}\}$, and
$\alpha=\{\alpha_{ijk}=\exp(B_{ijk})\}$ to make $\ke$ naturally untwisted.} 

The fact that $\Phi\cong\Phi_\ke$ lifts $\tilde\varphi$ by definition simply means
that $\Phi_{\ke}^{B,B'}=\tilde\varphi$ and the commutativity of  (\ref{extphi2}) becomes
$\varphi(x)=\left(\sqrt[n]{\ch(\ke^{\otimes n})}\cdot\sqrt{\td(S\times S')}\right)_*(x)$, cf.\ Section \ref{sec:Chern}.
In other words,
$$[\varphi]=\left(\sqrt[n]{\ch(\ke^{\otimes n})}\cdot\sqrt{\td(S\times S')}\right)_{(2,2)}\in H^2(S,\QQ)\otimes H^2(S',\QQ),$$
which is clearly an algebraic class.

The discussion above is summarized by the following reformulation of Theorem \ref{thm:main1bis}, also proving Corollary \ref{cor:Buskin} (i).

\begin{prop}\label{prop:thm}
Assume $\varphi\colon H^2(S,\QQ)\congpf H^2(S',\QQ)$ is a cyclic Hodge isometry. Then 
there exists an exact equivalence $\Phi\colon \Db(S,\alpha)\congpf\Db(S',\alpha')$
which induces $\varphi$ (up to sign) in the above sense. In particular, $[\varphi]$ is an algebraic class.\qqed
\end{prop}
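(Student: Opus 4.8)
The plan is to assemble, for the given cyclic Hodge isometry $\varphi$, the lattice-theoretic and derived-categorical inputs prepared in the two preceding subsections. First I would extract the $B$-field data attached to the cyclic structure: the quotient $H^2(S,\ZZ)/(\varphi^{-1}(H^2(S,\ZZ))\cap H^2(S,\ZZ))$ is cyclic of some order $n$, which yields a class $B\in H^2(S,\QQ)$ with $nB\in H^2(S,\ZZ)$ and, putting $B'\coloneqq-\varphi(B)$, an integral Hodge isometry $\varphi\colon H^2(S,\ZZ)_B\congpf H^2(S',\ZZ)_{B'}$ of the finite-index sublattices (in the reflective case $\varphi=s_b$ this is exactly the explicit computation carried out above, with $B=(1/n)b$ and $b'=-\varphi(b)$). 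Via the primitive embeddings $\exp(B)$ and $\exp(B')$ of the diagram (\ref{extphi2}) I would then extend $\varphi$ to the Hodge isometry $\tilde\varphi\colon\widetilde H(S,B,\ZZ)\congpf\widetilde H(S',B',\ZZ)$ of twisted Mukai lattices recorded in (\ref{eqn:Hodgeisoready}), checking by the lattice computation of the previous subsection that $\tilde\varphi$ is a genuine isometry carrying the weight-two Hodge structure on $\widetilde H(S,B,\ZZ)$ to that on $\widetilde H(S',B',\ZZ)$.

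Second, since the lifting theorem requires an orientation-preserving input, I would normalize $\tilde\varphi$: if it reverses the orientation of the four positive directions of the Mukai lattice, I compose it with $\id_{H^0}\oplus(-\id_{H^2})\oplus\id_{H^4}\colon\widetilde H(S',B',\ZZ)\congpf\widetilde H(S',-B',\ZZ)$, which replaces $B'$ by $-B'$ and amounts to changing $\varphi$ by a sign, harmless for the claim "up to sign." With an orientation-preserving $\tilde\varphi$ in hand I invoke the main result of \cite{HuySt}, which lifts it to an exact equivalence $\Phi\colon\Db(S,\alpha)\congpf\Db(S',\alpha')$, where $\alpha\in\Br(S)$ and $\alpha'\in\Br(S')$ are the Brauer classes of order dividing $n$ determined by $B$ and $B'$ through the exponential sequence. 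By \cite{CanSt} every such equivalence is of Fourier--Mukai type, $\Phi\cong\Phi_\ke$ for a kernel $\ke$ in $\Db(S\times S',\alpha^{-1}\boxtimes\alpha')$.

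Third, I would unwind what it means that $\Phi_\ke$ lifts $\tilde\varphi$: by definition the induced action $\Phi_\ke^{B,B'}$ on twisted Mukai lattices equals $\tilde\varphi$, and feeding this through the commutative square (\ref{extphi2}) shows that the restriction along $\exp(B)$ and $\exp(B')$ recovers the original $\varphi$ on $H^2$, which is exactly the assertion that $\Phi$ induces $\varphi$ in the sense explained above. Concretely, $[\varphi]$ is the $(2,2)$-component of $\sqrt[n]{\ch(\ke^{\otimes n})}\cdot\sqrt{\td(S\times S')}$. Since $b=nB$ and $b'=nB'$ are integral, $\ke^{\otimes n}$ is naturally untwisted, so $\ch(\ke^{\otimes n})$ is an honest algebraic Chern character; its formal $n$-th root and the factor $\sqrt{\td(S\times S')}$ have components that are $\QQ$-polynomial expressions in algebraic classes, whence $[\varphi]$ is algebraic, which also settles Corollary \ref{cor:Buskin}~(i).

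The main obstacle I expect is the twist bookkeeping in the third step: making precise, after fixing cocycles $b=\{b_{ijk}\}$, $B=\{(1/n)b_{ijk}\}$ and $\alpha=\{\exp(B_{ijk})\}$ as in the footnote and Section \ref{sec:Chern}, that the twisted Chern character $\ch^{-B,B'}(\ke)$ is well defined by the relation $\ch^{-B,B'}(\ke)^n=\exp(-b,b')\cdot\ch(\ke^{\otimes n})$, that its $(2,2)$-part genuinely lands in $H^2(S,\QQ)\otimes H^2(S',\QQ)$, and that it equals $[\varphi]$. The deep input, the derived global Torelli theorem of \cite{HuySt}, is quoted as a black box, so the remaining work is entirely in verifying that the orientation-corrected $\tilde\varphi$ is a valid input and that the resulting Fourier--Mukai correspondence restricts to the prescribed $\varphi$ while staying manifestly algebraic.
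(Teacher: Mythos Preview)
Your proposal is correct and follows essentially the same route as the paper: extend $\varphi$ to a Hodge isometry $\tilde\varphi$ of twisted Mukai lattices via (\ref{extphi2}), normalize orientation by the sign trick, invoke \cite{HuySt} to lift to $\Phi_\ke\colon\Db(S,\alpha)\congpf\Db(S',\alpha')$, and read off algebraicity of $[\varphi]$ from the $(2,2)$-component of $\sqrt[n]{\ch(\ke^{\otimes n})}\cdot\sqrt{\td(S\times S')}$.

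One small caution: your first step asserts that cyclicity of the quotient alone ``yields a class $B\in H^2(S,\QQ)$'' with the right properties, but you do not say how. The paper does not make this claim; its lattice computation in Section~1.2 is carried out explicitly for a \emph{reflection} $\varphi=s_b$, where $B=(1/n)b$ is canonically given, and the footnote there warns that one ``shall really have to work with reflections'' rather than with the weaker cyclic condition. Since every reflective Hodge isometry is cyclic and the reduction in (\ref{eqn:compcycl}) already produces reflections, this causes no trouble for the applications, but your write-up should either restrict to the reflective case from the outset (as the paper does) or supply the missing construction of $B$ and the extension $\tilde\varphi$ for a general cyclic $\varphi$.
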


\begin{remark}\label{rem:twisted} It seems natural to ask whether maybe
the existence of an arbitrary Hodge iso\-morphism $H^2(S,\QQ)\cong H^2(S',\QQ)$, so one that 
is not necessarily an isometry,  also implies $\Db(S,\alpha)\cong\Db(S',\alpha')$ for appropriately chosen Brauer classes
$\alpha\in\Br(S)$ and $\alpha'\in\Br(S')$. Although we have not worked out a concrete example, this seems
unlikely for two reasons: First, although any equivalence $\Db(S)\cong\Db(S')$ yields a natural isomorphism $\Br(S)\cong
\Br(S')$
, one should not expect that for a Brauer classe $\alpha\in\Br(S)$ and its image  $\alpha'\in\Br(S')$,
there always exists an equivalence $\Db(S,\alpha)\cong\Db(S',\alpha')$ (simply because for very general choices all two-dimensional
moduli spaces of objects in $\Db(S,\alpha)$ should be isomorphic to $S$).
Second, an equivalence $\Db(S,\alpha)\cong\Db(S',\alpha')$ only  induces a natural isomorphism $T(S,\alpha)\cong T(S',\alpha')$
but none between the untwisted transcendental lattices and hence none between the Brauer groups.
In particular, we do not a priori expect an arbitrary (non-cyclic)
Hodge isometry $H^2(S,\QQ)\cong H^2(S',\QQ)$ to be induced by some equivalence
$\Db(S,\alpha)\cong\Db(S',\alpha')$.
\end{remark}

So, in order to turn Theorem \ref{thm:main1} into an `if and only if'-statement, one could define $S$ and $S'$ to be 
\emph{twisted derived equivalent} if there exists a diagram as in (\ref{eqn:der}). Then one has

\begin{cor}[Twisted derived  global Torelli theorem]
Two complex projective K3 surfaces $S$ and $S'$ are isogenous if and only if they are twisted derived equivalent.\qed
\end{cor}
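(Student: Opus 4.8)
The plan is to deduce the Corollary directly from Theorem~\ref{thm:main1} together with Proposition~\ref{prop:thm}, so essentially no new mathematics is needed beyond packaging the two implications.

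\textbf{The ``only if'' direction.} Suppose $S$ and $S'$ are isogenous, i.e.\ there is a Hodge isometry $H^2(S,\QQ)\cong H^2(S',\QQ)$. By Theorem~\ref{thm:main1} this factors through projective K3 surfaces $S=S_0,S_1,\ldots,S_n=S'$ with each intermediate Hodge isometry $H^2(S_{i-1},\QQ)\cong H^2(S_i,\QQ)$ reflective, hence cyclic. By Proposition~\ref{prop:thm} (equivalently, by the displayed chain~(\ref{eqn:der})), each such cyclic Hodge isometry lifts to an exact linear equivalence of twisted derived categories $\Db(S_{i-1},\beta_{i-1})\cong\Db(S_i,\alpha_i)$ for suitable Brauer classes. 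Reassembling these equivalences gives exactly a diagram of the shape~(\ref{eqn:der}), which is the definition of $S$ and $S'$ being twisted derived equivalent. Here one should be slightly careful that the reduction to reflective isometries in~(\ref{eqn:compcycl}) genuinely produces projective surfaces $S_i$ (it does, by surjectivity of the period map combined with the fact that a reflection $s_{b_i}$ maps an algebraic class to an algebraic class, so ampleness can be arranged), but this is already contained in the discussion preceding Theorem~\ref{thm:main1bis}.

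\textbf{The ``if'' direction.} Conversely, suppose $S$ and $S'$ are twisted derived equivalent, so there is a chain~(\ref{eqn:der}). Each single equivalence $\Db(T,\gamma)\cong\Db(T',\gamma')$ of twisted K3 surfaces induces, via its Fourier--Mukai kernel and the twisted Chern character, an orientation-preserving Hodge isometry of twisted Mukai lattices $\widetilde H(T,\gamma,\ZZ)\cong\widetilde H(T',\gamma',\ZZ)$; restricting to the transcendental parts (or simply rationalizing and using that the twisted Mukai lattice of $(T,\gamma)$ is, rationally, $\widetilde H(T,\QQ)$ with the same weight-two Hodge structure as the untwisted Mukai lattice up to the $\exp$-twist, which becomes an isometry after $\otimes\QQ$) yields a rational Hodge isometry $H^2(T,\QQ)\cong H^2(T',\QQ)$ --- more precisely one gets $\widetilde H(T,\QQ)\cong\widetilde H(T',\QQ)$ as rational Hodge structures with pairing, and since $\widetilde H\cong H^2\oplus U$ rationally as Hodge structures, Witt cancellation (invoked already in the text after Theorem~\ref{thm:main2}) produces the desired Hodge isometry $H^2(T,\QQ)\cong H^2(T',\QQ)$. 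Composing the rational Hodge isometries coming from the successive links in~(\ref{eqn:der}) gives a Hodge isometry $H^2(S,\QQ)\cong H^2(S',\QQ)$, i.e.\ $S$ and $S'$ are isogenous.

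\textbf{Main obstacle.} The content is entirely in Theorem~\ref{thm:main1}; the Corollary is a formal consequence. The only point requiring a little care is the rational-Hodge-isometry bookkeeping in the ``if'' direction: one must check that the isometry on twisted Mukai lattices induced by a Fourier--Mukai equivalence really descends (after $\otimes\QQ$ and a Witt cancellation of the hyperbolic summand) to an \emph{isometry}, not merely an isomorphism, of the weight-two Hodge structures $H^2(-,\QQ)$. This is standard for twisted K3 surfaces but is the one place where one is not simply quoting a boxed result verbatim. With this in hand the equivalence of the two conditions follows immediately.
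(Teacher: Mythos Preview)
Your proposal is correct and matches the paper's approach; note that the paper gives no explicit proof at all (the statement ends with \qed), regarding the Corollary as an immediate repackaging of Theorem~\ref{thm:main1} for the forward direction and of the standard fact that a twisted Fourier--Mukai equivalence induces a Hodge isometry of twisted Mukai lattices (hence, after $\exp(-B)$ and the Witt argument you cite, of $H^2(-,\QQ)$) for the converse.

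One small inaccuracy worth flagging: your parenthetical justification for the projectivity of the intermediate $S_i$ --- that ``a reflection $s_{b_i}$ maps an algebraic class to an algebraic class'' --- is wrong in general, since $b_i$ need not lie in $\NS(S_{i-1})$; indeed $s_b(x)=x-\frac{2(x.b)}{(b)^2}b$ is algebraic only when $(x.b)=0$ if $b$ is transcendental. This does not affect your argument for the Corollary, as you correctly defer the point to the paper's discussion preceding Theorem~\ref{thm:main1bis}, but the specific reason you give should be dropped.
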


\subsection{} We conclude this section with a comparison to the earlier approaches by Buskin \cite{Buskin}
and Mukai \cite{MukaiVB}.

\begin{remark}\label{rem:History}
Mukai's approach in \cite{MukaiVB} was rather similar. Instead of  decomposing a given Hodge isometry
$ H^2(S,\QQ)\cong H^2(S',\QQ)$ into cyclic ones as in (\ref{eqn:compcycl}), he suggested to only
decompose the induced Hodge isometry $T(S)_\QQ\cong T(S')_\QQ$ into cyclic ones:
\begin{equation}\label{eqn:compcycltrans}
T(S)_\QQ=T(S_0)_\QQ\cong T(S_{1})_\QQ\cong\cdots\cong T(S_n)_\QQ=T(S')_\QQ.
\end{equation}
This reduces {\v{S}}afarevi{\v{c}}'s conjecture 
to a Hodge isometry $T(S)_\QQ\cong T(S')_\QQ$ for which the intersection $T\coloneqq T(S)\cap T(S')$
has finite cyclic quotients in $T(S)$ and in $T(S')$
and, using $\Br(S)\cong\Hom(T(S),\QQ/\ZZ)$, can then be written as $$T(S)\supset T(S,\alpha)\cong T\cong T(S',\alpha')\subset T(S')$$
for certain Brauer classes $\alpha\in \Br(S)$ and $\alpha'\in\Br(S')$. If, furthermore,
$T\cong T(\tilde S)$ for some K3 surface $\tilde S$, then $S$ and $S'$ can both be viewed as coarse moduli spaces
of sheaves on $\tilde S$ and the inclusions $T(S)\supset T(\tilde S)\subset T(S)$ are both algebraic, induced
by the twisted universal sheaves.
Unfortunately, the existence of the surface $\tilde S$ cannot be deduced from the surjectivity of period in general
(in contrast to (\ref{eqn:compcycl})) and, in fact,
$\tilde S$ may simply not exist. This limited
Mukai's approach \cite{MukaiVB} to the case $\rho(S)\geq 11$, later improved to $\rho(S)\geq 5$ by Nikulin  \cite{Nikulin}.

The idea of the present approach is that $\tilde S$ is not needed. Instead of viewing $S$ and $S'$ as coarse
moduli spaces of untwisted sheaves on some auxiliary K3 surface $\tilde S$, one realizes $S'$ directly as a coarse(!) moduli
space of (complexes of) twisted(!) sheaves on $S$. This accounts for the two additional Brauer classes at once: $\alpha\in\Br(S)$, as the twist
with respect to which one considers the twisted sheaves on $S$, and $\alpha'\in\Br(S')$, as the obstruction to the existence
of a universal family on $S\times S'$ (of $\alpha$-twisted sheaves in $S$).
\end{remark}

Buskin starts with the case of  a coarse moduli space of vector bundles $S'=M(v)$ with a twisted universal
bundle $\ke\in{\rm Coh}(S\times S',1\boxtimes\alpha')$, where $\alpha'$ is the obstruction to the
existence of a universal family. He then considers the graded(!) Hodge isometry $\widetilde H(S,\QQ)\congpf \widetilde H(S',\QQ)$
induced by the class $\kappa(\ke)\cdot\sqrt{\td(S\times S')}$, where $\kappa(\ke)=\sqrt[n]{\ch(\ke^{\otimes n}\otimes\det(\ke^\ast))}$
(up to dualizing $\ke$). Here the crucial observation is that $\ke^{\otimes n}\otimes\det(\ke^\ast)$ is naturally untwisted for any representative of the Brauer
classes. A straightforward computation shows
that $\kappa(\ke)$ differs from $\ch^{-B',B}(\ke)$ by the factor $\exp(-{\rm c}_1(\ke^{\otimes n})/n^3)\cdot \exp(B,-B')$
and so the difference between the action of $\kappa(\ke)$ and $\varphi$  is caused by an additonal
factor $\exp(-{\rm c}_1(\ke^{\otimes n})/n^3)$ on the product. Note that by construction $\varphi$
preserves the degree two part, which is not obvious from this comparison.

So, in the language of Remark \ref{rem:History}, the starting point in \cite{Buskin} is of the form
$T(S)\cong T\cong T(S',\alpha')\subset T(S')$. In a next step, $S$ and $S'$ are deformed along a twistor space
to K3 surfaces $S_t$ and $S'_t$.
This is a topologically  trivial operation, so yields isometries $H^2(S_t,\QQ)\cong H^2(S,\QQ)\cong H^2(S',\QQ)\cong H^2(S'_t,\QQ)$
and, for a suitable simultaneous choice of the twistor deformation, in fact a Hodge isometry $H^2(S_t,\QQ)\cong H^2(S'_t,\QQ)$. However, on the
transcendental part it leads to a situation of the form $T(S_t)\supset T(S_t,\alpha_t)\cong T_t\cong T(S'_t,\alpha_t')\subset T(S'_t)$, which
provides more flexibility.
Then Buskin argues that although $T_t$ may not be the transcendental part of a K3 surface, the correspondence
is still algebraic. Indeed, the (partially) twisted bundle $\ke$ deforms to a (completely) twisted
bundle $\ke_t$ on $S_t\times S'_t$, which uses the existence of Hermite--Einstein metrics on stable bundles.
At this point it becomes important to work not with complexes of sheaves as in our approach but with vector bundles.

To conclude, Buskin has to show that any cyclic Hodge isometry $H^2(\tilde S,\QQ)\cong H^2(\tilde S',\QQ)$ can
be reached by this procedure, applying several twistor deformations
which requires to work with non-projective K3 surfaces.

It should be possible to build upon Buskin's work to prove Proposition \ref{prop:thm}. The deformation of $\ke$ to $\ke_t$,
along several twistor lines and involving non-projective K3 surface when changing from one to the next twistor line, should
yield an equivalence. The approach presented here is more direct and more suitable to deal with K3 surfaces over other fields.

\section{Motives of coarse moduli spaces of twisted sheaves}
 
In this section we show the following result which generalizes \cite{HuyM} from the case of fine moduli spaces
of (complexes of) untwisted sheaves  to the case of coarse(!) moduli spaces of (complexes of) twisted(!) sheaves.

\begin{thm}\label{thm:mainsreform}
Any exact linear equivalence $\Db(S,\alpha)\cong\Db(S',\alpha')$
between twisted projective K3 surfaces $(S,\alpha)$ and $(S',\alpha')$ over an arbitrary field $k$
induces an isomorphism between their Chow motives
$$\hh(S)\cong\hh(S').$$
\end{thm}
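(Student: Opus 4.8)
The plan is to follow the strategy of \cite{HuyM}, which treats fine moduli spaces of complexes of \emph{untwisted} sheaves, and to check that every step survives the twisting. The starting point is that an exact linear equivalence $\Phi_\ke\colon\Db(S,\alpha)\congpf\Db(S',\alpha')$ is of Fourier--Mukai type with kernel $\ke\in\Db(S\times S',\alpha^{-1}\boxtimes\alpha')$; the key structural input is that $S'$ then becomes a (coarse, possibly twisted) moduli space of complexes on $S$, so that $\ke$ plays the role of a (twisted, quasi-)universal family. The rough idea of \cite{HuyM} is to use this to produce, after base change to the generic point, an isomorphism of motives and then to spread it out.

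\smallskip

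The key steps, in order, would be the following. First, reduce to the case where $k$ is finitely generated over its prime field, and pass to the generic point $\eta$ of $S$: one shows that the fibre $\ke_\eta$ of the kernel over $\eta$ is (after a twist by a line bundle on $S'$ and using the Riemann--Roch / Mukai-vector computation as in Section~\ref{sec:Chern}) represented by a single $\alpha'_{k(\eta)}$-twisted sheaf, in fact a stable twisted sheaf on $S'_{k(\eta)}$ with known Mukai vector. Second, use the point $[\ke_\eta]\in M$ on the (twisted) moduli space together with the fact that $S'$ \emph{is} such a moduli space to identify a $k(\eta)$-rational point of $S'$, i.e.\ a morphism $\Spec k(\eta)\to S'$, hence by spreading out a dominant rational map $S\dashrightarrow S'$; the inverse equivalence gives one in the other direction. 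Third, and this is the heart, one lifts the correspondence to Chow groups: the twisted Chern character $\ch^{-B,B'}(\ke)\cdot\sqrt{\td(S\times S')}$ lives in $\CH^*(S\times S')_\QQ$ after choosing cocycle representatives (as explained in the footnote to Section~\ref{sec:Chern}, $\ke^{\otimes n}$ is naturally untwisted, so $\sqrt[n]{\ch(\ke^{\otimes n})}$ makes sense rationally), and this class defines a morphism of Chow motives $\gamma\colon\hh(S)\to\hh(S')$ whose realization is $\tilde\varphi$ on $\widetilde H$, hence $\varphi$ on $H^2$. One checks $\gamma$ is an isomorphism by composing with the analogous class for the inverse kernel $\ke^\vee$ and showing the composite is the identity on $\hh(S)$ — this uses that $\Phi_\ke$ is an equivalence, so $\ke^\vee\circ\ke\cong\ko_\Delta$ in the twisted derived category, and that passing to twisted Chern characters is compatible with composition of correspondences (the twists $\alpha^{-1}\boxtimes\alpha'$ and $\alpha'^{-1}\boxtimes\alpha$ cancel on the middle factor).

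\smallskip

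The reduction of the ``isomorphism on motives'' to the ``isomorphism on $H^2$'' uses the structure of $\hh(S)$ for a K3 surface: $\hh(S)=\hh^0\oplus\hh^2\oplus\hh^4$ with $\hh^0\cong\LL^0$, $\hh^4\cong\LL^2$ canonically, and $\hh^2(S)$ the only interesting summand. One shows that the correspondence $\gamma$ respects this grading up to the standard unipotent modification (as in \cite{HuyM}: the off-diagonal pieces land in $\Hom(\hh^0,\hh^2)=\CH^1(S')_\QQ$ etc., which one can normalize away), so that the content is really the induced map $\hh^2(S)\to\hh^2(S')$; that this is an isomorphism follows from the Hodge realization being $\varphi$ together with the fact — available for K3 surfaces — that a morphism $\hh^2(S)\to\hh^2(S')$ of Chow motives which is an isomorphism on $H^2$ is already an isomorphism of motives, because $\End(\hh^2(S))$ injects into $\End(H^2(S,\QQ))$ on the relevant ``transcendental'' part. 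Alternatively, and more robustly over arbitrary $k$, one argues directly with the two mutually inverse correspondences as above, avoiding any realization argument for invertibility.

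\smallskip

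The main obstacle I expect is the bookkeeping of twists in the composition-of-correspondences step: one must make sense of $\ke^\vee\circ\ke$ as a genuine cycle class (not just a derived-category object) in $\CH^*(S\times S)_\QQ$ and verify that the twisted Chern character sends the derived composition $\ke'\ast\ke$ to the product of cycle classes $\ch^{-B,B''}(\ke')\circ\ch^{-B,B'}(\ke)$, \emph{after} the $\alpha'$-twists on the middle factor have cancelled — this is where the choice of compatible $\v{C}$ech cocycles for $b,b',b''$ and the relation $\ch^{-B,B'}(\ke)^n=\exp(-b,b')\ch(\ke^{\otimes n})$ have to be handled uniformly. Granting the analogous untwisted statement from \cite{HuyM} and the twisted GRR-type compatibilities recalled in Section~\ref{sec:Chern}, this should go through, but it is the step requiring genuine care rather than a citation; the rest (reduction to $k$ finitely generated, spreading out, the motivic decomposition of $\hh(S)$) is essentially formal or borrowed verbatim from \cite{HuyM}.
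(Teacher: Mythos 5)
The sound part of your plan is the one the paper spends Section~\ref{sec:Chern} on: making $\ch(\ke)=\sqrt[n]{\ch(\ke^{\otimes n})}$ well defined in $\CH^*(S\times S')_\QQ$, its multiplicativity, the twisted GRR formula and compatibility with convolution of kernels. But the step you call the heart --- proving invertibility --- has a genuine gap on both of the routes you offer. The first route, ``a morphism $\hh^2(S)\to\hh^2(S')$ of Chow motives which is an isomorphism on $H^2$ is already an isomorphism, because $\End(\hh^2(S))$ injects into $\End(H^2(S,\QQ))$ on the transcendental part,'' is not a known fact: it is exactly a conservativity statement, which for K3 surfaces is only available under Kimura finite-dimensionality (this is precisely the conditional Proposition in Section~\ref{sec:comments} of the paper), and in any case a Hodge-realization argument is unavailable over the arbitrary field $k$ of the theorem. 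The second route, composing with the inverse kernel and using $\ke^\vee\ast\ke\cong\ko_\Delta$, works at the level of the full ungraded Mukai vectors, but a morphism $\hh(S)\to\hh(S')$ (or $\hh^2_{\rm tr}(S)\to\hh^2_{\rm tr}(S')$) is a codimension-two correspondence, and degree-two components do not compose: $(v(\ke')\circ v(\ke))_2\ne v_2(\ke')\circ v_2(\ke)$ in general, so the composite of the degree-two pieces is not $[\Delta]$, and ``normalizing away the off-diagonal pieces'' does not repair this. The paper's actual mechanism is different and avoids both problems: decompose $\hh(S)\cong\hh_{\rm alg}(S)\oplus\hh^2_{\rm tr}(S)$ after Kahn--Murre--Pedrini, handle $\hh_{\rm alg}$ by equality of Picard numbers, observe that on $\CH^*(\hh^2_{\rm tr}(S))=\CH^2(S)_0\otimes\QQ$ the action of the degree-two class $v_2(\ke)$ coincides with that of the full $v(\ke)$, which is invertible on ungraded Chow groups by the twisted Chern character corollary, and note that this persists after arbitrary base change; Manin's identity principle then yields that $v_2(\ke)_*\colon\hh^2_{\rm tr}(S)\to\hh^2_{\rm tr}(S')$ is an isomorphism. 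No two-sided inverse is ever computed as a cycle, and no realization functor is used.

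Your opening steps are also not part of the argument and are not justified: there is no need to reduce to finitely generated fields or to pass to the generic point, the fibre $\ke_\eta$ of the kernel need not be (quasi-isomorphic to) a single stable twisted sheaf --- kernels of equivalences between K3 surfaces can be genuine complexes --- and the conclusion that one obtains a dominant rational map from $S$ to $S'$ is neither established by what you write nor used anywhere in the proof. If you drop those steps, keep the twisted Chern character formalism (which is exactly the paper's Section~\ref{sec:Chern}), and replace the invertibility argument by the restriction to $\hh^2_{\rm tr}$ together with the base-change/Manin identity principle argument from \cite{HuyM}, you recover the paper's proof.
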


\subsection{}\label{sec:Chern}
We shall need a few facts on Chern character of twisted sheaves. The arguments are all standard, but as 
there is no appropriate reference we sketch the relevant bits in a rather ad hoc manner.

Let $\alpha\in\Br(X)$ be a Brauer class on a smooth projective variety $X$  with a \v{C}ech representative (in the analytic or \'etale topology) $\alpha=\{\alpha_{ijk}\in\ko^\ast(U_{ijk})\}$.
We shall assume that $\alpha_{ijk}^n=1$, which is stronger than just assuming $\alpha^n=1$. 

The abelian category of $\alpha$-twisted coherent sheaves is incarnated by the category
 of $\{\alpha_{ijk}\}$-twisted coherent sheaves ${\rm Coh}(X,\{\alpha_{ijk}\})$, but we will use ${\rm Coh}(X,\alpha)$ as a shorthand
 (see \cite{HuySt} for comments on the dependence of the choice).
Now, observe that for any locally free
$\{\alpha_{ijk}\}$-twisted sheaf $E=\{E_i,\varphi_{ij}\}$ the tensor product $E^{\otimes n}=\{E_i^{\otimes n},\varphi_{ij}^{\otimes n}\}$
is naturally untwisted, i.e.\ $\varphi_{ij}^{\otimes n}\circ \varphi_{jk}^{\otimes n}\circ
\varphi_{ki}^{\otimes n}=\alpha_{ijk}^n=1$, so that Chern classes of $E^{\otimes n}$ are well defined in $\CH^*(X)$  (or in cohomology).
Now define $$\ch(E)\coloneqq\sqrt[n]{\ch(E^{\otimes n})}\in \CH^*(X)_\QQ.$$
The $n$-th root is obtained by the usual purely formal operation, using that
$\rk(E^{\otimes n})\ne0$.\footnote{For K3 surfaces with a rational point,
Chern characters of untwisted sheaves are integral. This does not hold for twisted sheaves, as taking the $n$-th root requires 
to work with rational coefficients.}

\smallskip

We leave it to the reader to check the following facts:
\begin{itemize}
\item[(i)] The definition is independent of $n$ in the sense that $\ch(E)=\sqrt[n]{\ch(E^{\otimes n})}=\sqrt[mn]{\ch(E^{\otimes mn})}$.

\item[(ii)] For locally free $\{\alpha_{ijk}\}$-twisted
sheaves $E$ and $F$ we have $\ch(E\otimes F)=\ch(E)\cdot\ch(F)$ and $\ch(E\oplus F)=\ch(E)+\ch(F)$. A similar formula holds
for exact sequences.

\item[(iii)] As $X$ is smooth projective, any twisted
sheaf admits a locally free resolution and, hence,  the Chern character is well-defined for all $E\in{\rm Coh}(X,\alpha)$
and even for objects in the bounded derived category $\Db(X,\alpha)$.\footnote{This also explains how to interprete  $\sqrt[n]{\ch(\ke^{\otimes})}$ in Section \ref{sec:kappa}.} 

\item[(iv)] For a morphism $f\colon Y\to X$ of smooth projective varieties,  the Grothendieck--Riemann--Roch formula
holds: $\ch\left(Rf_*\ch(E)\right)\cdot\td(X)=f_*\left(\ch(E)\cdot\td(Y)\right)$ in $\CH^*(X)_\QQ$ for any $E\in\Db(Y,f^*\alpha)$.
(This is easily reduced to the usual formula by tensoring both sides with $f^*G$ and $G$, respectively, for some locally free
$\{\alpha_{ijk}^{-1}\}$-twisted sheaf $G$ on $X$.)
\end{itemize}

\smallskip

Once these facts are established, the yoga of Fourier--Mukai kernels 
$\ke$, their action on the Chow ring, induced by $v(\ke)\coloneqq\ch(\ke)\sqrt{\td(S\times S')}\in\CH^*(S\times S')_\QQ$,
and how they behave under convolutions,
works as in the untwisted case, cf.\ \cite{HuyFM}. The next result is an example.
For this, we assume that $\alpha=\{\alpha_{ijk}\}$ and $\alpha'=\{\alpha_{ijk}'\}$ are Brauer classes
on K3 surfaces $S$ and $S'$, respectively, both satisfying $\alpha_{ijk}^n=1$ and $\alpha'^n_{ijk}=1$.

\begin{cor} 
Let $\Phi_\ke\colon\Db(S,\alpha)\congpf\Db(S',\alpha')$ be an exact equivalence with Fourier--Mukai kernel
$\ke\in\Db(S\times S',\alpha^{-1}\boxtimes\alpha')$. Then the induced action
\begin{equation}\label{eqn:isoChow}
v(\ke)_*\colon\CH^*(S)_\QQ\congpf\CH^*(S')_\QQ
\end{equation}
is an isomorphism of ungraded $\QQ$-vector spaces.\qed
\end{cor}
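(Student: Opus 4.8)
The plan is to mimic the classical proof that a Fourier--Mukai equivalence induces an isomorphism on (ungraded, rational) Chow groups, transporting it to the twisted setting with the Chern character machinery set up in (i)--(iv) above. First I would observe that $\Phi_\ke$ has an inverse $\Phi_\kf\colon\Db(S',\alpha')\congpf\Db(S,\alpha)$, again of Fourier--Mukai type with kernel $\kf\in\Db(S'\times S,\alpha'^{-1}\boxtimes\alpha)$, by the twisted analogue of the fact that equivalences of derived categories of smooth projective varieties are of Fourier--Mukai type (invoked already in Section \ref{sec:kappa}). The compositions $\Phi_\kf\circ\Phi_\ke$ and $\Phi_\ke\circ\Phi_\kf$ are then isomorphic to the identity, hence (on the level of kernels, up to the usual convolution formalism) $\kf\ast\ke\cong\ko_{\Delta_S}$ and $\ke\ast\kf\cong\ko_{\Delta_{S'}}$ in the relevant twisted derived categories, where here the diagonals carry the trivial twist $\alpha^{-1}\boxtimes\alpha$ restricted to $\Delta_S$, which is canonically trivialized.

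Next I would check that $v(-)\coloneqq\ch(-)\sqrt{\td}$ intertwines convolution of kernels with composition of the induced correspondences on $\CH^*(-)_\QQ$. This is the twisted version of the computation in \cite{HuyFM}: given kernels $\ke$ on $S\times S'$ and $\kf$ on $S'\times S''$, one has $v(\kf\ast\ke)_* = v(\kf)_*\circ v(\ke)_*$ as maps $\CH^*(S)_\QQ\to\CH^*(S'')_\QQ$. The proof is the standard projection-formula-and-GRR juggling, and every ingredient is available: the twisted Chern character is multiplicative and compatible with pullback trivially, and property (iv) above supplies Grothendieck--Riemann--Roch for the pushforwards along the (untwisted!) projections $S\times S'\times S''\to S\times S''$, after the twists on the various factors cancel as they do in the definition of convolution. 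One should also record that $v(\ko_{\Delta_S})_* = \id$ on $\CH^*(S)_\QQ$, which is immediate since $\ch(\ko_{\Delta_S})\sqrt{\td(S\times S)}$ is the class of the diagonal corrected by $\sqrt{\td}$, and $\sqrt{\td(S)}\cdot\sqrt{\td(S)}^{-1}$ cancels in the correspondence action (alternatively, normalize $v$ using $\sqrt{\td}$ on both factors so the diagonal acts as the identity on the nose).

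Granting these two compatibilities, the corollary is immediate: $v(\kf)_*\circ v(\ke)_* = v(\kf\ast\ke)_* = v(\ko_{\Delta_S})_* = \id_{\CH^*(S)_\QQ}$ and symmetrically $v(\ke)_*\circ v(\kf)_* = \id_{\CH^*(S')_\QQ}$, so $v(\ke)_*$ is a bijection, hence an isomorphism of $\QQ$-vector spaces. It is ungraded precisely because $\ch(\ke)$ and $\sqrt{\td}$ have components in several degrees, so the correspondence mixes codimensions; no grading is claimed or used.

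The main obstacle I expect is not any deep point but the bookkeeping with the twisting cocycles under convolution: one must make coherent choices of \v{C}ech representatives $\alpha=\{\alpha_{ijk}\}$ with $\alpha_{ijk}^n=1$ (and likewise $\alpha'$, and on $S''$) so that $\ke^{\otimes n}$, $\kf^{\otimes n}$, and the tensor powers of the convolution are all \emph{simultaneously} and \emph{compatibly} untwisted, allowing the $n$-th-root definition of $\ch$ to be applied termwise and the identity $v(\kf\ast\ke)_*=v(\kf)_*\circ v(\ke)_*$ to be deduced by taking $n$-th roots of the corresponding untwisted GRR identity for $\ke^{\otimes n}$ and $\kf^{\otimes n}$. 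This is exactly the kind of compatibility flagged in the footnotes of Section \ref{sec:kappa} and in \cite{HuySt,HuySt1}, and once it is granted the proof is the formal argument above. Since the statement is labelled a corollary and the facts (i)--(iv) are declared to hold, I would keep the write-up short, citing \cite{HuyFM} for the untwisted convolution formalism and indicating only the cocycle matching as the twisted-specific point.
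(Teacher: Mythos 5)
Your argument is exactly the "yoga of Fourier--Mukai kernels" that the paper invokes: using facts (i)--(iv), convolution of kernels is intertwined with composition of the induced correspondences, the diagonal kernel acts as the identity, and applying this to a kernel of the inverse equivalence gives a two-sided inverse to $v(\ke)_*$. This matches the paper's (implicit, citation-based) proof, including the caveat about choosing compatible cocycle representatives, so nothing further is needed.
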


\subsection{} The rest of the argument to prove Theorem \ref{thm:mainsreform} can be copied from \cite{HuyM}.
Here is a  rough outline: First, the motive of a K3 surface is decomposed into its
algebraic and its transcendental part $\hh(S)\cong\hh^2_{\rm tr}(S)\oplus\hh_{\rm alg}(S)$, where
$\hh_{\rm alg}(S)\cong\LL^0\oplus\LL^{\oplus \rho(S)}\oplus\LL^2$ and the transcendental part
$\hh^2_{\rm tr}(S)$, introduced in \cite{KMP}, has the property that $\CH^*(\hh^2_{\rm tr}(S))=\CH^2(S)_0\otimes \QQ$.
Now, derived equivalent K3 surfaces $(S,\alpha)$ and $(S',\alpha')$ have clearly the same Picard number
and, therefore, $\hh_{\rm alg}(S)\cong\hh_{\rm alg}(S')$. Thus, it remains to find an isomorphism
$\hh^2_{\rm tr}(S)\cong\hh^2_{\rm tr}(S')$. As morphisms $\hh_{\rm tr}^2(S)\to\hh_{\rm tr}^2(S')$ in $\Mot(k)$
require degree two classes on $S\times S'$, instead of $v(\ke)\in\CH^*(S\times S')_\QQ$, which induces 
the isomorphism (\ref{eqn:isoChow}), one has to consider the degree two component $v_2(\ke)\in\CH^2(S\times S')_\QQ$.
The induced action on $\CH^*(\hh^2_{\rm tr}(S))=\CH^2(S)_0\otimes \QQ$ coincides with the action of the full Mukai vector
$v(\ke)$. Hence, 
\begin{equation}\label{eqn:isoMot}
v_2(\ke)_*\colon\hh^2_{\rm tr}(S)\to \hh^2_{\rm tr}(S')
\end{equation}
 induces isomorphisms between the Chow groups
of the motives. As this holds true after any base change, a version of Manin's identity principle then implies that (\ref{eqn:isoMot})
is an isomorphism, for details see \cite{HuyM}.

%
%

\section{Further comments}\label{sec:comments}

Let us briefly indicate the evidence for the motivic global Torelli theorem as formulated
in Conjecture \ref{conj}. According to the following remarks, Theorem \ref{thm:main2}, which provides
evidence for the equivalence of (i) and (ii) in Conjecture \ref{conj}, may also be seen as 
evidence for a much more general set of conservativity conjectures. 
Note that the following arguments apply to arbitrary surfaces (with trivial irregularity).

\begin{prop}
Assume the Hodge conjecture holds for the product  $S\times S'$ of two complex projective K3 surfaces and assume that
the motives $\hh(S)$ and $\hh(S')$ of both surfaces are Kimura finite-dimensional. Then any isomorphism
of Hodge structures $H^2(S,\QQ)\congpf H^2(S',\QQ)$ lifts to an isomorphism of motives $\hh(S)\cong\hh(S')$.
\end{prop}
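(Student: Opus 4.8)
The plan is to realize $\psi$ by an algebraic correspondence using the Hodge conjecture, to cut this correspondence down to the transcendental motives, and then to invoke Kimura's nilpotency theorem in order to upgrade an isomorphism on cohomology to an isomorphism of motives.

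First, we record that $\psi$, being a morphism of rational Hodge structures, gives rise via the K\"unneth formula and Poincar\'e duality to a Hodge class $[\psi]\in H^{2,2}(S\times S',\QQ)$, exactly as for the Hodge isometries discussed in the introduction, and similarly $\psi^{-1}$ yields $[\psi^{-1}]\in H^{2,2}(S'\times S,\QQ)$. The Hodge conjecture for $S\times S'$ then provides cycles $\Gamma\in\CH^2(S\times S')_\QQ$ and $\Gamma'\in\CH^2(S'\times S)_\QQ$ with cohomology classes $[\psi]$ and $[\psi^{-1}]$; as correspondences they induce $\psi$, respectively $\psi^{-1}$, on $H^2$ and annihilate $H^0$ and $H^4$. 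Since $\psi$ is a Hodge isomorphism, it carries the algebraic part $\NS(S)_\QQ=H^{1,1}(S)\cap H^2(S,\QQ)$ isomorphically onto $\NS(S')_\QQ$ and the transcendental Hodge structure $T(S)_\QQ$ isomorphically onto $T(S')_\QQ$. In particular $\rho(S)=\rho(S')$, so the algebraic parts of the motives agree, $\hh_{\rm alg}(S)\cong\LL^0\oplus\LL^{\oplus\rho}\oplus\LL^2\cong\hh_{\rm alg}(S')$, and the $\LL^{\oplus\rho}$-summand may be chosen so as to realize $\psi|_{\NS(S)_\QQ}$ (this uses only that rational multiples of $\id_\LL$ are algebraic). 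It then remains to produce an isomorphism $\hh^2_{\rm tr}(S)\congpf\hh^2_{\rm tr}(S')$ realizing $\psi|_{T(S)_\QQ}$.

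Next, we compose $\Gamma$ and $\Gamma'$ with the refined Chow--K\"unneth projectors $\pi^2_{\rm tr}$ onto the transcendental motives, available for surfaces with trivial irregularity by \cite{KMP}, to obtain morphisms $\gamma\colon\hh^2_{\rm tr}(S)\to\hh^2_{\rm tr}(S')$ and $\gamma'\colon\hh^2_{\rm tr}(S')\to\hh^2_{\rm tr}(S)$ whose Betti realizations are the mutually inverse isomorphisms $\psi|_{T(S)_\QQ}$ and $\psi^{-1}|_{T(S')_\QQ}$. Hence $\gamma'\circ\gamma-\id_{\hh^2_{\rm tr}(S)}$ and $\gamma\circ\gamma'-\id_{\hh^2_{\rm tr}(S')}$ have vanishing realization, hence are numerically trivial. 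Here the finite-dimensionality hypothesis enters: as direct summands of the Kimura finite-dimensional motives $\hh(S)$ and $\hh(S')$, the motives $\hh^2_{\rm tr}(S)$ and $\hh^2_{\rm tr}(S')$ are themselves finite-dimensional, so Kimura's nilpotency theorem shows that these numerically trivial endomorphisms are nilpotent. Consequently $\gamma'\circ\gamma$ and $\gamma\circ\gamma'$ are automorphisms, and since a morphism possessing both a left and a right inverse is an isomorphism, $\gamma$ is an isomorphism $\hh^2_{\rm tr}(S)\congpf\hh^2_{\rm tr}(S')$; by construction it realizes $\psi|_{T(S)_\QQ}$. Together with $\hh_{\rm alg}(S)\cong\hh_{\rm alg}(S')$ and the tautological identifications of $\hh^0$ and $\hh^4$, this produces an isomorphism $\hh(S)\cong\hh(S')$ whose realization on $H^2$ is $\psi$.

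The two hypotheses are used at precisely the two points one expects to be the main obstacles: without the Hodge conjecture for $S\times S'$ one has only the Hodge class $[\psi]$ and no genuine cycle $\Gamma$, and without Kimura finite-dimensionality one cannot pass from ``$\gamma$ invertible on cohomology'' to ``$\gamma$ invertible as a morphism of motives'', since the homologically trivial error term need not be nilpotent. The remaining ingredients are routine: the standard identification of morphisms of Hodge structures $H^2(S,\QQ)\to H^2(S',\QQ)$ with Hodge classes in $H^{2,2}(S\times S',\QQ)$, and the implication ``homologically trivial $\Rightarrow$ numerically trivial'' that makes Kimura's theorem (stated for the numerical ideal) applicable. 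As noted before the statement, the argument uses only the decomposition $\hh(S)\cong\LL^0\oplus(\hh^2_{\rm tr}(S)\oplus\LL^{\oplus\rho(S)})\oplus\LL^2$ and the finite-dimensionality of its summands, so it applies verbatim to arbitrary surfaces with trivial irregularity.
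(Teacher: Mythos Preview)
Your proof is correct and follows essentially the same approach as the paper's: lift the Hodge class to a cycle via the Hodge conjecture, then use Kimura finite-dimensionality to promote the resulting morphism of motives from a numerical isomorphism to a genuine one. The only differences are expository---the paper works directly with $\hh^2$ and cites conservativity of the realization functor \cite[Cor.\ 3.16]{AndreBourb} as a black box, whereas you split off the algebraic part first and unpack conservativity into the explicit nilpotency argument.
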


\begin{proof}
The argument is similar to the proof of \cite[Thm.\ 21]{DPPed}.
 If the Hodge conjecture is assumed, the class $[\varphi]\in H^{2,2}(S\times S',\QQ)$ of any
isomorphism of Hodge structures $\varphi\colon H^2(S,\QQ)\congpf H^2(S',\QQ)$ is induced by 
a class $\gamma\in\CH^2(S\times S')_\QQ$, which defines a morphism $\gamma_*\colon\hh^2(S)\to \hh^2(S')$ of Chow motives.
As Kimura finite-dimensionality implies conservativity, cf.\ \cite[Cor.\ 3.16]{AndreBourb}, $\gamma_*$ is an isomorphism
if and only if its numerical realization, which is nothing but $\varphi$, is an isomorphism.  \end{proof}

\begin{cor}
The two conditions {\rm (i)} and {\rm (ii)} in Conjecture \ref{conj} are equivalent  if the Hodge conjecture for $S\times S'$
and Kimura's finite-dimensionality conjecture for $S$ and $S'$ hold true.\qed
\end{cor}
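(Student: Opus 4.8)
The plan is to prove the two implications of Conjecture~\ref{conj} separately. Under the stated hypotheses the implication (i)$\Rightarrow$(ii) is precisely the preceding Proposition, while (ii)$\Rightarrow$(i) holds unconditionally and formally; assembling the two gives the asserted equivalence.

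First I would dispose of (ii)$\Rightarrow$(i), for which no hypothesis is needed. By definition of the category $\Mot(k)$, an isomorphism of Chow motives $\hh(S)\cong\hh(S')$ is a correspondence class in $\CH^{2}(S\times S')_\QQ$ (of codimension $\dim S=2$) admitting a two-sided inverse. Applying the Betti realization, a tensor functor from $\Mot(\CC)$ to the category of graded polarizable $\QQ$-Hodge structures, turns such an isomorphism into an isomorphism of graded $\QQ$-Hodge structures $H^{*}(S,\QQ)\cong H^{*}(S',\QQ)$, since algebraic correspondences of codimension $\dim S$ act in degree zero and are morphisms of Hodge structures; restricting to the weight-two summand yields the isomorphism of rational Hodge structures required by (i).

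For (i)$\Rightarrow$(ii) I would simply invoke the Proposition, which under exactly our two hypotheses lifts any Hodge isomorphism $\varphi\colon H^{2}(S,\QQ)\congpf H^{2}(S',\QQ)$ to an isomorphism $\hh(S)\cong\hh(S')$. Recall the mechanism: the Hodge conjecture for the fourfold $S\times S'$ produces a cycle $\gamma\in\CH^{2}(S\times S')_\QQ$ realizing $[\varphi]$, hence a morphism $\gamma_{*}\colon\hh^{2}(S)\to\hh^{2}(S')$; Kimura finite-dimensionality of $\hh(S)$ and $\hh(S')$ makes the monoidal subcategory they generate conservative under realization, so $\gamma_{*}$ is an isomorphism because its numerical realization, namely $\varphi$, is one (cf.\ \cite[Cor.\ 3.16]{AndreBourb}), and combining this with the canonical identifications of the $\hh^{0}$ and $\hh^{4}=\LL^{2}$ summands upgrades it to $\hh(S)\cong\hh(S')$. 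Combining the two implications completes the proof.

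The substance lies not in this deduction — which is purely formal once the Proposition is at hand — but in its two conjectural inputs: the Hodge conjecture for $S\times S'$ is what makes it possible to manufacture an algebraic cycle out of $[\varphi]$ at all, and Kimura's finite-dimensionality conjecture is what converts the resulting self-correspondence into an isomorphism; the one standard fact used in between is the conservativity of realization functors on Kimura finite-dimensional motives. I would also remark, as before the Proposition, that no irregularity terms intervene here, since $H^{1}$ and $H^{3}$ of a K3 surface vanish and everything is governed by the weight-two cohomology.
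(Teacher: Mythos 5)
Your proposal is correct and follows exactly the route the paper intends: the corollary is stated with an immediate \qed because (i)$\Rightarrow$(ii) is precisely the preceding Proposition, while (ii)$\Rightarrow$(i) is the formal observation that an isomorphism of Chow motives induces, via the Hodge realization, a degree-preserving isomorphism on cohomology and hence an isomorphism $H^2(S,\QQ)\cong H^2(S',\QQ)$ of rational Hodge structures. Your write-up merely makes explicit what the paper leaves implicit, with no difference in substance.
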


In an earlier version of this paper, Conjecture \ref{conj} included a third
statement about  the classes of $[S]$ and $[S']$ being equal in an appropriate localization of the
Grothendieck ring of varieties $K_0({\rm Var}(\CC))$.
For example, if $S$ and $S'$  are isogenous, then according to (\ref{eqn:der}), they are linked via a sequence of equivalences $\Db(S,\alpha)\cong \Db(S_1,\alpha_1),\ldots, \Db(S_{n-1},\beta_{n-1})\cong\Db(S',\alpha')$. We then speculated that maybe \cite[Conj.\ 1.6]{KS} (with evidence
provided by the examples studied in \cite{HL,IMOU,KS}) could also hold in the twisted
case, so that  $[S]-[S']$ in $K_0({\rm Var}(\CC))$ is annihilated by some power of the Lefschetz motive $\ell\coloneqq[{\mathbb A}^1]$, i.e.\ $[S]=[S']$ in $K_0({\rm Var}(\CC))[\ell^{-1}]$. 
However, as shown by Efimov  \cite{Ef}, this  is false.
There exist derived equivalent twisted(!) K3 surfaces that are not L-equivalent.

\begin{remark}\label{rem:Schlic}
According to \cite{Zarhin}, the endomorphism field ${\rm End}_{\rm Hdg}(T(S)\otimes\QQ)$ of the rational Hodge structure
$T(S)\otimes \QQ$ is either totally real or has
complex multiplication. The two cases can be distinguished by checking whether there exists of a Hodge isometry other than $\pm {\rm id}$,
see \cite[Ch.\ 3]{HuyK3}. The Hodge conjecture for K3 surfaces with real multiplication has been verified in only very few cases, see \cite{Schlick}.

In case of CM, the endomorphism field is spanned by Hodge isometries cf.\ \cite[Thm.\ 3.3.7]{HuyK3}, which is enough to prove Corollary \ref{cor:Buskin} (ii) and
can also be used to prove the Hodge conjecture for products $S\times S'$ of K3 surfaces with complex multiplication
for which there exists a Hodge isometry $T(S)_\QQ\cong T(S')_\QQ$.
\end{remark}


\end{document}